\documentclass[hidelinks,onefignum,onetabnum]{siamart220329}



\usepackage{graphicx,subcaption}
\usepackage[left=2.2cm,right=2.5cm,top=2.5cm,bottom=3.5cm]{geometry}
\usepackage{boldline}
\usepackage{tikz}
\usepackage{stmaryrd} 
\usepackage{fancyhdr}
\usepackage{titlesec}
\usepackage{secdot}
%


\usepackage{amsfonts}
\usepackage{epstopdf}
\usepackage{algorithmic}
\ifpdf
  \DeclareGraphicsExtensions{.eps,.pdf,.png,.jpg}
\else
  \DeclareGraphicsExtensions{.eps}
\fi


\newcommand{\llkh}{ \{\!\!\{ }
\newcommand{\rrkh}{ \}\!\!\} }
%
\newcommand{\VhK}{\mathbb{V}_h^k}
\newcommand{\bxi}{\boldsymbol{\xi}}
\newcommand{\ExampleHeatEqn}{Example 5.1}
\newcommand{\ExampleAnisoDiff}{Example 5.2}
\newcommand{\ExamplePorousManuf}{Example 5.3}
\newcommand{\ExamplePorousSmooth}{Example 5.4}
\newcommand{\ExamplePorousDiscontin}{Example 5.5}
\newcommand{\ExampleBlwp}{Example 5.6}

\newsiamremark{remark}{Remark}
\newsiamremark{hypothesis}{Hypothesis}
\crefname{hypothesis}{Hypothesis}{Hypotheses}
\newsiamthm{claim}{Claim}

\headers{A generalized framework for DDG methods}{M. E. Danis and J. Yan}

\title{A generalized framework for direct discontinuous Galerkin methods for nonlinear diffusion equations}

\author{Mustafa E. Danis\footnotemark[2] \thanks{Department of Aerospace Engineering, Iowa State University, Ames, 50011, USA (\email{danis@iastate.edu}).}
\thanks{asd} \and Jue Yan\thanks{Department of Mathematics, Iowa State University, Ames, 50011, USA (\email{jyan@iastate.edu}).}}
\usepackage{amsopn}


\ifpdf
\hypersetup{
  pdftitle={A generalized framework for direct discontinuous Galerkin methods for nonlinear diffusion equations},
  pdfauthor={Mustafa E. Danis, Jue Yan}
}
\fi


\begin{document}

\maketitle
\begin{abstract}
In this study, we propose a unified, general framework for the direct discontinuous Galerkin methods. In the new framework, the antiderivative of the nonlinear diffusion matrix is not needed. This allows a simple definition of the numerical flux, which can be used for general diffusion equations with no further modification. We also present the nonlinear stability analyses of the new direct discontinuous Galerkin methods and perform several numerical experiments to evaluate their performance. The numerical tests show that the symmetric and the interface correction versions of the method achieve optimal convergence and are superior to the nonsymmetric version, which demonstrates optimal convergence only for problems with diagonal diffusion matrices but loses order for even degree polynomials with a non-diagonal diffusion matrix. Singular or blow up solutions are also well captured with the new direct discontinuous Galerkin methods.
\end{abstract}

\begin{keywords}
Discontinuous Galerkin method, nonlinear diffusion equations, stability, convergence
\end{keywords}

\begin{MSCcodes}
65M12, 65M60
\end{MSCcodes}

\section{Introduction}\label{sec:introduction}

In this paper, we continue to study direct discontinuous Galerkin method \cite{liuyan2008ddg} and 
other three versions of the direct discontinuous Galerkin (DDG) method \cite{liuyan2010ddgic,vidden2013sddg,yan2013} for solving the nonlinear diffusion equation
\begin{equation}\label{eqn:nonlinear-diffusion-eqn}
	\frac{\partial U}{\partial t}=\nabla\cdot(A(U)\nabla U),\hspace{0.5cm} (\textbf{x},t)\in\Omega\times(0,T),
\end{equation} 
with the initial data $U_0(\textbf{x})=U(\textbf{x},0)$. Nonlinear diffusion matrix $A(U)\in\mathbb{R}^{d\times d}$ is assumed to be positive definite. We adapt $\Omega\subset\mathbb{R}^d$ to denote the computational domain. In this study, we consider a 2-dimensional setting with $d=2$. Our focus is to derive a generalized and unified DDG method for nonlinear diffusion equations (\ref{eqn:nonlinear-diffusion-eqn}) that can be easily extended and applied to system and multi dimensional cases.

The discontinuous Galerkin (DG) method was first introduced by Reed and Hill for neutron transport equations in 1973 \cite{reed1973triangular}. However, it is after a series of papers by Cockburn, Shu et al. \cite{RKDG2,RKDG3,RKDG4,RKDG5} that the DG method became the archetype of high order methods used in the scientific community. Essentially, the DG method is a finite element method, but with a discontinuous piecewise polynomial space defined for the numerical solution and test function in each element. Due to this property, the DG method has a smaller and more compact stencil compared to its continuous counterpart. Hence, the data structure required to implement DG methods is extremely local, which allows efficient parallel computing and $hp$-adaptation. 

One of the key features of the DG method is that the communication between computational elements is established through a \textit{numerical flux} defined at element interfaces. In this regard, the DG method bears a striking similarity to finite volume method, where a Riemann solver is employed to calculate the numerical flux. Therefore, the DG method enjoys the high-order polynomial approximations as a finite element method while benefiting from the characteristic decomposition of the wave propagation provided by Riemann solvers as a finite volume method. For this reason, the DG method has been successfully applied to hyperbolic problems, i.e. compressible Euler equations, in the last three decades, cf. \cite{Cockburn-book, Shu-DGreview2014,zhang-MPS-Review}. 

On the other hand, for elliptic and parabolic problems, i.e. linear/nonlinear diffusion equations, the numerical flux must involve a proper definition for the solution gradient at element interfaces. It is, in fact, the variety of this definition that leads to several DG methods such as the interior penalty (IPDG) methods \cite{arnold1982interior,wheeler1978elliptic,baker1977finite}, the nonsymmetric interior penalty (NIPG) method \cite{NIPG1, NIPG2} and the symmetric interior penalty (SIPG) method \cite{hartmann2006a, hartmann2006b, hartmann2008}. Another important group of DG methods for solving diffusion problems include the method of Bassi and Rebay (BR) and its variations \cite{BR1, BR2, BR3, BR4}; the local DG (LDG) method \cite{LDG1998,LDG2002,LDG2002b}; the method of Baumann and Oden (BO) \cite{BO1,BO2}; hybridized DG (HDG) method \cite{Cockburn2009}. Recent works include the weakly over-penalized SIPG method \cite{Brenner2008}; weak DG \cite{LIN2015346} method and ultra weak DG method \cite{ChengYingda2008AdGf,ChenAnqi2018AUDG}. For a review of these methods, we refer to \cite{arnold2002unified,Shu2016} and the references therein. Among the many DG methods mentioned, there is little discussion of nonlinear diffusion equations except Bassi and Rebay \cite{BR1} and Local DG related methods \cite{LDG1998,Cockburn2009}. 

In addition to all the efforts mentioned above to devise a numerical flux for the diffusive terms in elliptic/parabolic equations, Liu and Yan \cite{liuyan2008ddg} introduced Direct DG (DDG) method for nonlinear diffusion equations. Inspired by the exact trace formula corresponding to the solution of heat equation with a smooth initial data that contains a discontinuous point, Liu and Yan derived a simple formula for the numerical flux to compute the solution derivative at the element interface. Although DDG method is proven to converge to the exact solution optimally when measured in an energy norm, it suffers from an order loss in the $L_2$-norm when the solution space is approximated by even degree polynomials. In order to recover optimal convergence, Liu and Yan \cite{liuyan2010ddgic} developed the direct DG method with interface correction (DDGIC). In their subsequent studies, Yan and collaborators presented symmetric and nonsymmetric versions of the DDG method \cite{vidden2013sddg, yan2013}. Even though DDG methods degenerate to the IPDG method with piecewise constant and linear polynomial approximations, there exist a number of advantages with DDG methods for higher order approximations. For such advantages, 
we refer to the discussions on a third order bound preserving scheme in \cite{chen2016}, superconvergence to $\nabla U$ in \cite{Zhang-Yan-2017,Qiu-Yan-2019} and elliptic interface problems with different jump interface conditions in \cite{Huang-Yan-2020}.

Despite the aforementioned favorable features, in the previous versions of DDG methods, the numerical flux definition is based on the antiderivative of the nonlinear diffusion matrix $A(U)$. However, this antiderivative might not exist if the diffusion matrix $A(U)$ is complicated enough. Therefore, the previous DDG methods are not applicable to nonlinear equations with such diffusion matrices. One important example where the diffusion matrix $A(U)$ cannot be integrated explicitly is the energy equation of compressible Navier-Stokes equations. 
A similar difficulty arises for the interface terms involving test function. 
This problem is addressed by defining a new direction vector on element interfaces, which depends on the nonlinear diffusion matrix $A(U)$ and geometric information of the interface.
With the introduction of the nonlinear direction vector, the evaluation of the nonlinear numerical flux is greatly simplified. Interface terms can also be clearly defined with no ambiguity.  
Danis and Yan recently applied the method in \cite{danis2021new} to solve compressible Navier-Stokes equations with DDGIC method.
This treatment of a generic diffusion process opens up the possibility of a straightforward extension of all DDG versions to the complicated nonlinear diffusion equations, which motivates this study. 
 
In this paper, the concept of the nonlinear direction vector is extended to all versions of DDG method in a generalized, unified framework. The new framework does not only address the problem of calculating the antiderivative of the diffusion matrix, but also provides an easy and practical recipe for using the DDG methods for general system of conservation laws. Moreover, interface terms of all versions of DDG methods are presented within a unified format that is clean and easy to be evaluated. Nonlinear stability analyses are presented for the new DDGIC, symmetric DDG and nonsymmetric DDG methods, and we investigate their performance in several numerical experiments. Since DDG methods degenerate to the IPDG method with low order approximations as mentioned, all numerical tests are conducted with high order polynomial approximations. In the numerical tests, optimal order of accuracy are obtained for DDGIC and symmetric DDG methods over uniform triangular meshes while a slight fraction of order loss is observed for nonsymmetric DDG method with even degree polynomial approximations. It is also shown that singular or blow up phenomena can be well captured under the new DDG framework.
 
Throughout the paper, we denote the exact solution of \Cref{eqn:nonlinear-diffusion-eqn} by the uppercase $U$ and the DG solution of \Cref{eqn:nonlinear-diffusion-eqn} by the lowercase $u$. The rest of the paper is organized as follows. In \Cref{sec:review-of-ddg}, we briefly review the direct DG methods. In \Cref{sec:scheme formulation}, the new methodology is described. In \Cref{sec:nonlinear_stab}, nonlinear stability analysis are presented. Implementation details of the new methods are explained and several numerical examples are presented in \Cref{sec:numerical-examples}. Finally, we draw our conclusions in \Cref{sec:conclusion}.
 
\section{A review of direct DG methods} \label{sec:review-of-ddg}

In this section, we will present a brief review of the original direct DG methods \cite{liuyan2008ddg, liuyan2010ddgic,vidden2013sddg,yan2013} and the required notation for later use. 

We consider a shape regular triangular mesh partition $\mathcal{T}_h$ of $\Omega$ such that $\overline{\Omega}= \cup_{{K\in \mathcal{T}_h}} K$. For each element $K$, we denote the diameter of the inscribed circle by $h_K$. Furthermore, we define the numerical solution space as 
\begin{equation*}
	\mathbb{V}_h^k:=\{v\in L^2(\Omega): v(x,y)\vline_K\in \mathbb{P}_k(K)\},
\end{equation*} 
where $\mathbb{P}_k(K)$ represents the space of polynomials of degree $k$ in two dimensions. Note that this solution space is discontinuous across element interfaces. For this purpose, we adopt the following notation for the interface solution jump and average 
\begin{equation*}
    \llbracket u \rrbracket=u^+-u^-, \hspace{1cm} \llkh u\rrkh=\frac{u^+ + u^-}{2},\quad \forall (x,y) \in \partial K,
\end{equation*} 
where $u^+$ and $u^-$ are the solution values calculated from the exterior and interior of the element $K$. 

Next, for a test function $v\in\VhK$, we multiply \Cref{eqn:nonlinear-diffusion-eqn} by $v$, integrate it by parts and apply the divergence theorem to obtain the weak form of \Cref{eqn:nonlinear-diffusion-eqn}. Along with the initial projection, the weak form is then given by
\begin{align}
    &\int_{K}u_t v ~dxdy + \int_{K} A(u)\nabla u \cdot\nabla v ~dxdy 
        -\int_{\partial K}  \widehat{A(u)\nabla u} \cdot \mathbf{n}\, v ~ds=0,\quad \forall v\in\VhK, \label{eqn:weak-form-nonlinear-diffusion-eqn}\\
    &\int_{K}u(x,y,0)v(x,y)~dxdy = \int_{K} U_0(x,y)v(x,y)~dxdy,\quad \forall v\in\VhK. \label{eqn:initial-projection}     
\end{align}
In \Cref{eqn:weak-form-nonlinear-diffusion-eqn}, the volume integration is performed over individual elements $K$ and the surface integral is performed over the element boundary $\partial K$. Here, $\mathbf{n}$ is the outward unit normal vector on $\partial K$. Furthermore, since $u(x,y,t)|_K$ is a discontinuous across the elements, $A(u)\nabla u$ is multi-valued on $\partial K$. For this reason, $A(u)\nabla u$ is written with a hat in the surface integral term in \Cref{eqn:weak-form-nonlinear-diffusion-eqn}. In fact, $\widehat{A(u)\nabla u}$ is known as the numerical flux. The original DDG method \cite{liuyan2008ddg} defines the numerical flux as
\begin{equation}\label{eqn:original-ddg-numerical-flux}
	\widehat{a_{ij}(u)u_{x_j}} = \frac{\beta_0}{h_e} \llbracket b_{ij}(u) \rrbracket n_j + \llkh b_{ij}(u)_{x_j}\rrkh + \beta_1 h_e \llbracket b_{ij}(u)_{x_1 x_j} n_1 + b_{ij}(u)_{x_2 x_j} n_2\rrbracket,
\end{equation}
where we denote by $a_{ij}(u)$ the $ij$ component of the diffusion matrix $A(u)$. Here, $b_{ij}(u)$ are the components of the matrix $B(u)$. Basically, the components of $B(u)$ are the antiderivatives of $a_{ij}(u)$ and it is defined as $b_{ij}(u)=\int^u a_{ij}(s) ds$. In \Cref{eqn:original-ddg-numerical-flux}, $h_e$ is the average of the element diameters sharing the edge $\partial K$, $n_j$ are the components of the unit normal $\mathbf{n}$ for $j=1,2$, and the subscripts $x_j$ denote the partial derivative with respect to the corresponding spatial coordinate axis for $j=1,2$.
Furthermore, $(\beta_0,\beta_1)$ is a pair of coefficients that affects the stability and optimal convergence of the DDG method. Along with \Cref{eqn:weak-form-nonlinear-diffusion-eqn,eqn:original-ddg-numerical-flux}, the definition of the original direct DG method \cite{liuyan2008ddg} is now completed.

It is well-known that the original DDG method loses an order for even degree polynomials \cite{liuyan2008ddg}. This problem is fixed either by including a jump term for the test function or introducing a \textit{numerical flux} for the test function. What determines the name of the corresponding DDG version is in fact how these additional terms are implemented. 

\subsection{The DDG method with interface correction} 

The scheme formulation of the original DDGIC method \cite{liuyan2010ddgic} is given as
\begin{equation*}\label{eqn:weak-form-ddgic}
    \begin{aligned}
        \int_{K}u_t v ~dxdy & + \int_{K} A(u)\nabla u \cdot\nabla v ~dxdy \\
        &-\int_{\partial K}  \widehat{A(u)\nabla u} \cdot \mathbf{n}\, v ~ds +\int_{\partial K}  \llbracket B(u)\rrbracket\llkh\nabla v \rrkh \cdot \mathbf{n} ~ds=0,\quad \forall v\in\VhK,   
    \end{aligned}
\end{equation*}
where the numerical flux $\widehat{A(u)\nabla u}$ is calculated using \Cref{eqn:original-ddg-numerical-flux}. Note that the test function $v$ is only nonzero inside the element $K$ by definition, thus the interface correction term is calculated as
$$
\llbracket B(u)\rrbracket\llkh\nabla v \rrkh = \frac{1}{2} \llbracket B(u)\rrbracket (\nabla v)^- .
$$
\subsection{The symmetric DDG method}

The scheme formulation of the original symmetric DDG method \cite{vidden2013sddg} is given by
\begin{equation*}\label{eqn:weak-form-symddg}
    \begin{aligned}
        \int_{K}u_t v ~dxdy &+ \int_{K} A(u)\nabla u \cdot\nabla v ~dxdy \\
        &-\int_{\partial K}  \widehat{A(u)\nabla u} \cdot \mathbf{n}\, v ~ds+\int_{\partial K} \llbracket B(u)\rrbracket\widehat{\nabla v} \cdot \mathbf{n} ~ds=0,\quad \forall v\in\VhK.
    \end{aligned}   
\end{equation*}
As in the DDGIC method, the numerical flux $\widehat{A(u)\nabla u}$ is calculated by \Cref{eqn:original-ddg-numerical-flux} while the numerical for the test function is given as
\begin{equation}\label{eqn:symmetric-ddg-test-function-numerical-flux}
    \begin{aligned}
		\widehat{v}_x&=\beta_0\frac{\llbracket v\rrbracket}{h_e} n_1+\llkh v_x\rrkh+\beta_1 h_e \llbracket v_{xx}n_1+v_{yx}n_2 \rrbracket, \\
		\widehat{v}_y&=\beta_0\frac{\llbracket v \rrbracket}{h_e} n_2+\llkh v_y\rrkh+\beta_1 h_e \llbracket v_{xy}n_1+v_{yy}n_2\rrbracket.
	\end{aligned}
\end{equation}
Note that the test function is zero outside of the element $K$. Thus, \Cref{eqn:symmetric-ddg-test-function-numerical-flux} can be simplified as
\begin{equation*}\label{eqn:symmetric-ddg-test-function-numerical-flux-simplified}
    \begin{aligned}
		\widehat{v}_x&=-\beta_0\frac{v^-}{h_e} n_1+\frac{1}{2} v_x^--\beta_1 h_e \left(v_{xx}^-n_1+v_{yx}^-n_2\right), \\
		\widehat{v}_y&=-\beta_0\frac{ v^- }{h_e} n_2+\frac{1}{2}v^-_y-\beta_1 h_e \left(v^-_{xy}n_1+v^-_{yy}n_2\right).
	\end{aligned}
\end{equation*}

\subsection{The nonsymmetric DDG method}

The scheme formulation of the original nonsymmetric DDG method \cite{yan2013} is given by
\begin{equation*}\label{eqn:weak-form-nonddg}
    \begin{aligned}
        \int_{K}u_t v ~dxdy &+ \int_{K} A(u)\nabla u \cdot\nabla v ~dxdy \\
        &-\int_{\partial K}  \widehat{A(u)\nabla u} \cdot \mathbf{n}\, v ~ds-\int_{\partial K} \widetilde{A(v)\nabla v} \cdot \mathbf{n}\llbracket u \rrbracket ~ds=0,\quad \forall v\in\VhK. 
    \end{aligned}  
\end{equation*}
Similar to the other DDG versions, the numerical flux $\widehat{A(u)\nabla u}$ is calculated by \Cref{eqn:original-ddg-numerical-flux}. The numerical flux for the test function is defined similarly but with a different penalty coefficient $\beta_{0v}$:
\begin{equation}\label{eqn:nonsymmetric-ddg-test-function-numerical-flux}
	\widetilde{a_{ij}(v)v_{x_j}} = \frac{\beta_{0v}}{h_e} \llbracket b_{ij}(v) \rrbracket n_j + \llkh b_{ij}(v)_{x_j}\rrkh + \beta_1 h_e \llbracket b_{ij}(v)_{x_1 x_j} n_1 + b_{ij}(v)_{x_2 x_j} n_2\rrbracket.
\end{equation}
Since the test function is undefined outside of the element $K$, \Cref{eqn:nonsymmetric-ddg-test-function-numerical-flux} can be simplified as
\begin{equation*}\label{eqn:nonsymmetric-ddg-test-function-numerical-flux-simplified}
	\widetilde{a_{ij}(v)v_{x_j}} = -\frac{\beta_{0v}}{h_e} b_{ij}(v^-) n_j + \frac{1}{2} b_{ij}(v^-)_{x_j} - \beta_1 h_e \left(b_{ij}(v^-)_{x_1 x_j} n_1 + b_{ij}(v^-)_{x_2 x_j} n_2)\right).
\end{equation*}

\section{The new DDG framework for nonlinear diffusion equations}\label{sec:scheme formulation}

As can be seen in the previous section, the numerical flux definition of original DDG versions is based on calculating an antiderivative matrix $B(u)$ that is calculated according to 
$$
B(u)=\int^u A(s) ds.
$$
A major drawback occurs when the components of the diffusion matrix $A(u)$ cannot be integrated explicitly. In such cases, none of the original DDG versions can be implemented. A striking example is the energy equation of compressible Navier-Stokes equations. This drawback limits the use of the original DDG versions only to simple applications where the antiderivative matrix $B(u)$ is available.

The new framework is based on the adjoint-property of inner product, which was used in the proof of a bound-preserving limiter with DDGIC method \cite{chen2016}. On the continuous level, the integrand of the surface integral in the weak form \Cref{eqn:weak-form-nonlinear-diffusion-eqn} can be rewritten as
\begin{equation*}
    A(u)\nabla u\cdot \mathbf{n} = \nabla u \cdot A(u)^T\mathbf{n}.
\end{equation*}
 By applying the adjoint-property, we define a new direction vector $\bxi(u)=A(u)^T\mathbf{n}$. The new direction vector is simply obtained by stretching/compressing and rotating the unit normal vector $\mathbf{n}$ through diffusion matrix $A(u)$. On the discrete level, the new direction vector can be calculated by
\begin{equation}\label{eqn:discrete-new-direction-vector}
    \bxi\left(\llkh u \rrkh\right)=A(\llkh u \rrkh)^T\mathbf{n}.
\end{equation}
The numerical flux can suitably be defined as
\begin{equation*}
    \widehat{A(u)\nabla u}\cdot \mathbf{n} = \widehat{\nabla u} \cdot \bxi\left(\llkh u \rrkh\right),
\end{equation*}
where the numerical flux $\widehat{\nabla u}=(\widehat{u}_x,\widehat{u}_y)$ can be computed by the original DDG numerical flux formula for the heat equation \cite{liuyan2008ddg}:
\begin{equation}\label{eqn:ddg-numflux}
	\begin{aligned}
		\widehat{u}_x&=\beta_0\frac{\llbracket u\rrbracket}{h_e} n_1+\llkh u_x\rrkh+\beta_1 h_e \llbracket u_{xx}n_1+u_{yx}n_2 \rrbracket, \\
		\widehat{u}_y&=\beta_0\frac{\llbracket u \rrbracket}{h_e} n_2+\llkh u_y\rrkh+\beta_1 h_e \llbracket u_{xy}n_1+u_{yy}n_2\rrbracket.
	\end{aligned}
\end{equation}

Now, we reformulate all DDG versions for \Cref{eqn:nonlinear-diffusion-eqn} according to the new framework: Find $u\in\VhK$ such that \begin{equation}\label{eqn:new-ddg-scheme}
    \begin{aligned}
        \int_{K}u_t v ~dxdy &+ \int_{K} A(u)\nabla u \cdot\nabla v ~dxdy \\ 
        &-\int_{\partial K}  \widehat{\nabla u} \cdot \bxi(\llkh u \rrkh) v ~ds+\sigma\int_{\partial K} \llbracket u \rrbracket \widetilde{\nabla v} \cdot \bxi(\llkh u \rrkh) ~ds=0, \quad \forall v\in\VhK,
    \end{aligned}
\end{equation}
where $\sigma=0$ for the basic DDG scheme, $\sigma=1$ for DDGIC and symmetric DDG schemes, and $\sigma=-1$ for the nonsymmetric DDG scheme. Furthermore, we denote by $\widetilde{\nabla v}$ the numerical flux for the test function $v\in\VhK$. Along with the following definitions of the numerical flux for the test function, \Cref{eqn:new-ddg-scheme} defines the new DDG versions:

\vspace{2mm}
\noindent
\underline{\emph{The baseline DDG scheme} ($\sigma=0$)}:
\begin{equation}\label{eqn:basic-ddg-test-function-numerical-flux}
    \widetilde{\nabla v}=\mathbf{0}.
\end{equation}

\noindent
\underline{\emph{The DDGIC scheme} ($\sigma=1$)}:
\begin{equation}\label{eqn:ddgic-test-function-numerical-flux}
    \widetilde{\nabla v} = \llkh \nabla v \rrkh.
\end{equation}

\noindent
\underline{\emph{The symmetric DDG scheme} ($\sigma=1$)}:
\begin{equation}\label{eqn:sym-ddg-test-function-numerical-flux}
    \begin{aligned}
    	\widetilde{v}_x&=\beta_0\frac{\llbracket v\rrbracket}{h_e} n_1+\llkh v_x\rrkh+\beta_1 h_e \llbracket v_{xx}n_1+v_{yx}n_2 \rrbracket, \\
    	\widetilde{v}_y&=\beta_0\frac{\llbracket v \rrbracket}{h_e} n_2+\llkh v_y\rrkh+\beta_1 h_e \llbracket v_{xy}n_1+v_{yy}n_2\rrbracket.
    \end{aligned}
\end{equation}

\noindent
\underline{\emph{The nonsymmetric DDG scheme} ($\sigma=-1$)}:
\begin{equation}\label{eqn:nonsym-ddg-test-function-numerical-flux}
	\begin{aligned}
		\widetilde{v}_x&=\beta_{0v}\frac{\llbracket v\rrbracket}{h_e} n_1+\llkh v_x\rrkh+\beta_1 h_e \llbracket v_{xx}n_1+v_{yx}n_2 \rrbracket, \\
		\widetilde{v}_y&=\beta_{0v}\frac{\llbracket v \rrbracket}{h_e} n_2+\llkh v_y\rrkh+\beta_1 h_e \llbracket v_{xy}n_1+v_{yy}n_2\rrbracket.
	\end{aligned}
\end{equation}
In \cite{chen2016}, the adjoint property of inner product was only used in the proof of positivity-preserving (Theorem 3.2). All numerical tests of \cite{chen2016} involving nonlinear diffusion equations were implemented  with the original DDGIC scheme formulation \cite{liuyan2010ddgic}. 

\begin{remark}
We summarize the main features and advantages of the generalized DDG methods (\ref{eqn:new-ddg-scheme}) for nonlinear diffusion equation (\ref{eqn:nonlinear-diffusion-eqn}).
\begin{itemize}
    \item Nonlinearity of the diffusion process goes into the corresponding new direction vector defined at the cell interface that greatly simplifies the implementation of the DDG method. 
    \item Numerical flux for the solution's gradient $\nabla u$ can be approximated by the linear numerical flux formula of the original DDG. Since the solution's gradient is independent of the governing equation, this allows the code reuse for general nonlinear diffusion problems. 
    \item The nonlinear direction vectors are further applied to define the interface terms involving test function. 
\end{itemize}
\end{remark}

\section{Nonlinear stability of the new DDG methods} \label{sec:nonlinear_stab}

In this section, we will discuss the nonlinear stability theory of the DDG methods developed in \Cref{sec:scheme formulation}. The important inequalities used in the proofs of the main theorems are discussed later in \Cref{sec:appendix}. 

We say that the DDG method is stable in $L_2$ sense if
\begin{equation*}
	\int_{\Omega} u^2(x,y,T) ~dxdy \leq \int_{\Omega} U^2_0(x,y) ~dxdy, \quad\forall T\geq0.
\end{equation*}
Note that the primal weak formulation of the new DDG methods is obtained by summing \Cref{eqn:initial-projection,eqn:new-ddg-scheme} over all element $K\in\mathcal{T}_h$.
\begin{align}
    &\int_{\Omega}u_t v ~dxdy+\mathbb{B}(u,v)=0,\quad \forall v\in\VhK.\label{eqn:primal-weak-form}\\
    &\int_{\Omega}u(x,y,0)v(x,y)~dxdy = \int_{\Omega} U_0(x,y)v(x,y)~dxdy,\quad \forall v\in\VhK, \label{eqn:primal-initial-projection}
\end{align}
Here, $U_0(x,y)$ denotes the initial data and $\mathbb{B}(u,v)$ is given by
\begin{equation}\label{eqn:bilinear-form}
    \mathbb{B}(u,v)=\int_{\Omega} A(u)\nabla u \cdot\nabla v~dxdy+\sum_{e\in\mathcal{E}_h}\int_{e}  \left(\llbracket v \rrbracket\widehat{\nabla u} +\sigma\llbracket u \rrbracket\widetilde{\nabla v}\right)\cdot \bxi(\llkh u \rrkh)  ~ds=0.
\end{equation}
where $\mathcal{E}_h=\cup_{{K\in \mathcal{T}_h}}\partial K$ represents the set of all element edges.

\begin{theorem}[Stability of nonsymmetric DDG method]\label{thm:stab-nonsym-ddg}
Let the model parameter $\sigma=-1$ in the scheme formulation \Cref{eqn:new-ddg-scheme} that is equipped with the numerical flux for the gradient of the numerical solution  \Cref{eqn:ddg-numflux} and the numerical flux for the gradient of the test function \Cref{eqn:nonsym-ddg-test-function-numerical-flux}. If $\beta_0 \geq \beta_{0v}$, then we have
	\begin{equation*}
		\int_{\Omega} u^2(x,y,T) ~dxdy \leq \int_{\Omega} U^2_0(x,y) ~dxdy, \quad\forall T\geq0.
	\end{equation*}
\end{theorem}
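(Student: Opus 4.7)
The plan is to extract an energy identity by testing against $v=u$ in the primal weak form \Cref{eqn:primal-weak-form}, and then show that the bilinear form $\mathbb{B}(u,u)$ is nonnegative under the assumption $\beta_0\geq\beta_{0v}$. Setting $v=u$ gives
\begin{equation*}
\frac{1}{2}\frac{d}{dt}\int_\Omega u^2\,dxdy + \mathbb{B}(u,u)=0,
\end{equation*}
so the entire task reduces to controlling $\mathbb{B}(u,u)$ from below by zero. The volume contribution $\int_\Omega A(u)\nabla u\cdot\nabla u\,dxdy$ is nonnegative by the positive definiteness of $A(u)$, so the work is in the interface sum.

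Substituting $v=u$ in $\widetilde{\nabla v}$ via \Cref{eqn:nonsym-ddg-test-function-numerical-flux} and comparing componentwise with $\widehat{\nabla u}$ in \Cref{eqn:ddg-numflux}, the key observation is that the averaged-derivative and $\beta_1$-jump terms are structurally identical in the two fluxes and cancel, leaving only the penalty difference
\begin{equation*}
\widehat{\nabla u}-\widetilde{\nabla u}=(\beta_0-\beta_{0v})\,\frac{\llbracket u\rrbracket}{h_e}\,\mathbf{n}.
\end{equation*}
With $\sigma=-1$, the edge integrand in \Cref{eqn:bilinear-form} collapses to $\llbracket u\rrbracket(\widehat{\nabla u}-\widetilde{\nabla u})\cdot\bxi(\llkh u\rrkh)$, and using $\bxi(\llkh u\rrkh)=A(\llkh u\rrkh)^T\mathbf{n}$ together with the identity $\mathbf{n}^T A^T\mathbf{n}=\mathbf{n}^T A\,\mathbf{n}$, each edge contributes
\begin{equation*}
(\beta_0-\beta_{0v})\,\frac{\llbracket u\rrbracket^2}{h_e}\,\mathbf{n}^T A(\llkh u\rrkh)\mathbf{n}\geq 0,
\end{equation*}
by positive definiteness of $A$ and the hypothesis $\beta_0\geq\beta_{0v}$. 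Hence $\mathbb{B}(u,u)\geq 0$ and the energy identity yields $\frac{d}{dt}\|u\|_{L^2(\Omega)}^2\leq 0$.

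Integrating from $0$ to $T$ and invoking the initial projection \Cref{eqn:primal-initial-projection} (which, by the $L^2$-optimality of the projection onto $\mathbb{V}_h^k$, gives $\|u(\cdot,0)\|_{L^2}\leq\|U_0\|_{L^2}$) delivers the stated bound. The main obstacle I anticipate is not the algebraic manipulation — the cancellation between $\widehat{\nabla u}$ and $\widetilde{\nabla u}$ is clean — but rather bookkeeping of boundary edges: the edge-set $\mathcal{E}_h$ in \Cref{eqn:bilinear-form} includes boundary faces, so the argument implicitly requires periodic or homogeneous Dirichlet data (for the latter, the boundary terms survive but retain the same sign, since $\llbracket u\rrbracket^2$ is replaced by $(u^-)^2$ and $\mathbf{n}^T A\mathbf{n}\geq 0$ still holds). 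A second subtle point worth double-checking is that the quadratic form interpretation $\mathbf{n}\cdot A^T\mathbf{n}=\mathbf{n}^T A\mathbf{n}$ is valid even when $A(U)$ is non-symmetric; positive definiteness here must be taken in the sense that the symmetric part of $A$ is positive definite, which is the standard convention assumed in \Cref{sec:introduction}.
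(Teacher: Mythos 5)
Your proposal is correct and follows essentially the same route as the paper's proof: test with $v=u$, observe that the two numerical fluxes differ only in the penalty coefficient so that $\widehat{\nabla u}-\widetilde{\nabla u}=\frac{\beta_0-\beta_{0v}}{h_e}\llbracket u\rrbracket\mathbf{n}$, conclude each edge term is $\frac{\beta_0-\beta_{0v}}{h_e}\llbracket u\rrbracket^2\,\mathbf{n}\cdot A(\llkh u\rrkh)^T\mathbf{n}\geq 0$, and finish with the $L^2$-contractivity of the initial projection (the paper phrases this via the Schwarz inequality applied to \Cref{eqn:primal-initial-projection} with $v=u(x,y,0)$). Your added remarks on boundary edges and on interpreting positive definiteness through the symmetric part of $A$ are sensible clarifications but do not change the argument.
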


\begin{proof} By setting $u=v$, we integrate \Cref{eqn:primal-weak-form} with respect to time over $(0,T)$.
\begin{equation}\label{eqn:thm-stab-nonsym-eq1}
    \frac{1}{2}\int_{\Omega}u^2(x,y,T) ~dxdy+\int_0^T\mathbb{B}(u,u)~dt=\frac{1}{2}\int_{\Omega}u^2(x,y,0)~dxdy,
\end{equation}
where
\begin{equation}\label{eqn:thm-stab-nonsym-eq2}
    \int_0^T\mathbb{B}(u,u)~dt=\int_0^T\int_{\Omega} A(u)\nabla u \cdot\nabla u ~dxdy\,dt+\sum_{e\in\mathcal{E}_h}\int_0^T \int_{e} \llbracket u \rrbracket (\widehat{\nabla u} -\widetilde{\nabla u})\cdot \bxi(\llkh u \rrkh)  ~ds\,dt.
\end{equation}
Since $A(u)$ is positive definite, we have $A(u)\nabla u \cdot\nabla u=\nabla u^TA(u)\nabla u>0$ and thus,
\begin{equation}\label{eqn:thm-stab-nonsym-eq3}
\int_0^T\int_{\Omega} A(u)\nabla u \cdot\nabla u ~dxdy\,dt\ge0, \quad\forall T\geq0.
\end{equation}
Furthermore, we have that 
\begin{equation*}
    \widehat{\nabla u} -\widetilde{\nabla u} = \frac{\beta_0^*}{h_e}\llbracket u \rrbracket\mathbf{n},
\end{equation*}
where $\beta_0^*=\beta_0-\beta_{0v}\ge 0$ by the assumptions of the theorem. Recalling that $\bxi(\llkh u \rrkh)=A(\llkh u \rrkh)^T\mathbf{n}$ and $A(u)$ is the positive definite, we can write
\begin{equation*}
    \llbracket u \rrbracket(\widehat{\nabla u} -\widetilde{\nabla u})\cdot \bxi(\llkh u \rrkh)=\frac{\beta_0^*}{h_e}\llbracket u \rrbracket^2 \left(\mathbf{n}\cdot A(\llkh u \rrkh)^T\mathbf{n}\right) > 0.
\end{equation*}
Thus, we have
\begin{equation}\label{eqn:thm-stab-nonsym-eq4}
    \int_0^T \int_{e} \llbracket u \rrbracket (\widehat{\nabla u} -\widetilde{\nabla u})\cdot \bxi(\llkh u \rrkh)  ~ds\,dt\ge0,\quad \forall e\in\mathcal{E}_h, \,\forall T\geq 0.
\end{equation}
Substituting \Cref{eqn:thm-stab-nonsym-eq3,eqn:thm-stab-nonsym-eq4} into \Cref{eqn:thm-stab-nonsym-eq2}, and then \Cref{eqn:thm-stab-nonsym-eq2} into \Cref{eqn:thm-stab-nonsym-eq1}, we obtain
\begin{equation*}
	\int_{\Omega} u^2(x,y,T) ~dxdy \leq \int_{\Omega} u^2(x,y,0) ~dxdy, \quad\forall T\geq0.
\end{equation*}
Finally, we apply the Schwarz inequality to 
the initial projection \Cref{eqn:primal-initial-projection} with $v=u(x,y,0)$ and obtain
\begin{equation*}
    \int_{\Omega}u^2(x,y,0)~dxdy \leq  \int_{\Omega} U_0^2(x,y)~dxdy, 
\end{equation*}
which completes the proof.
\end{proof}

\begin{theorem}[Stability of symmetric DDG method]\label{thm:stab-sym-ddg}
Assume that $A(u)$ is a positive definite matrix and there exists $\gamma,\gamma^*\in\mathbb{R}$ such that the eigenvalues $(\gamma_1,\gamma_2)$ of $A(u)$ lie between $[\gamma,\gamma^*]$ for $\forall u\in\VhK$. Furthermore, let the model parameter $\sigma=1$ in the scheme formulation \Cref{eqn:new-ddg-scheme} that is equipped with the numerical flux for the gradient of the numerical solution  \Cref{eqn:ddg-numflux} and the numerical flux for the gradient of the test function \Cref{eqn:sym-ddg-test-function-numerical-flux}. If $\beta_0 \geq C(\beta_1)k^2\left(\frac{\gamma^*}{\gamma}\right)^2 \beta_1^2$, then we have
\begin{equation*}
    \begin{aligned}
        \frac{1}{2}\int_{\Omega}u^2(x,y,T) ~dxdy + \left(1-C(\beta_1)k^2\left(\frac{\gamma^*}{\gamma}\right)^2 \frac{\beta_1^2}{\beta_0} \right)\int_0^T&\int_{\Omega} A(u)\nabla u \cdot\nabla u ~dxdy\,dt \\
        &\leq \frac{1}{2}\int_{\Omega}U^2_0(x,y) ~dxdy,
    \end{aligned}
\end{equation*}
where $C(\beta_1)=C_1/2\beta_1^2 + 2C_2>0$ and $C_1,C_2>0$ are constants.
\end{theorem}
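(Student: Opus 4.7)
The plan is to mirror the energy-method template of \Cref{thm:stab-nonsym-ddg}: set $v=u$ in the primal weak form \Cref{eqn:primal-weak-form}, integrate in time over $(0,T)$, and reduce the statement to producing a sharp lower bound on $\int_0^T\mathbb{B}(u,u)\,dt$. The initial-data control $\int_\Omega u^2(x,y,0)\,dxdy\leq \int_\Omega U_0^2\,dxdy$ obtained from \Cref{eqn:primal-initial-projection} by Cauchy--Schwarz carries over verbatim from the nonsymmetric proof, so the entire burden shifts to analyzing $\mathbb{B}(u,u)$ with $\sigma=1$ and the symmetric test-function flux \Cref{eqn:sym-ddg-test-function-numerical-flux}.

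The first observation I would exploit is that \Cref{eqn:sym-ddg-test-function-numerical-flux} is literally the same formula as the solution flux \Cref{eqn:ddg-numflux}, so evaluating at $v=u$ collapses $\widetilde{\nabla u}$ onto $\widehat{\nabla u}$. The boundary contribution in $\mathbb{B}(u,u)$ therefore reduces to
\begin{equation*}
2\sum_{e\in\mathcal{E}_h}\int_e \llbracket u\rrbracket\,\widehat{\nabla u}\cdot\bxi(\llkh u\rrkh)\,ds,
\end{equation*}
and I would split this integrand along the three pieces of $\widehat{\nabla u}$ in \Cref{eqn:ddg-numflux}. Combining the $\beta_0$-piece with $\bxi=A(\llkh u\rrkh)^{T}\mathbf{n}$ and the eigenvalue floor yields a nonnegative penalty contribution bounded below by $2\beta_0\gamma\llbracket u\rrbracket^2/h_e$ on each edge; this is the pool of coercivity used to absorb the remaining two signed terms.

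Next I would control the average-gradient term $2\llbracket u\rrbracket\llkh\nabla u\rrkh\cdot\bxi(\llkh u\rrkh)$ and the $\beta_1$-jump term (involving second derivatives of $u$ dotted with $\mathbf{n}$) by applying Young's inequality on each edge, paired with the trace and inverse inequalities collected in \Cref{sec:appendix}. Each such estimate contributes a term of the form $\epsilon\int_K A(u)\nabla u\cdot\nabla u\,dxdy$ plus a penalty remainder proportional to $\frac{1}{\epsilon h_e}(\gamma^*/\gamma)^2 k^2 \beta_1^2\llbracket u\rrbracket^2$. The ratio $\gamma^*/\gamma$ is unavoidable because the cross-terms are bounded from above using $\gamma^*$ while the penalty floor only sees $\gamma$, and the $k^2$ factor is the standard cost of an inverse estimate on $\mathbb{P}_k$. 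The combination $C(\beta_1)=C_1/(2\beta_1^2)+2C_2$ announced in the statement should emerge by tracking the constants $C_1,C_2$ produced by the two auxiliary inequalities. Under $\beta_0\geq C(\beta_1)k^2(\gamma^*/\gamma)^2\beta_1^2$, tuning $\epsilon$ so that the penalty remainders just cancel against the $\beta_0$ pool leaves the coefficient $1-C(\beta_1)k^2(\gamma^*/\gamma)^2\beta_1^2/\beta_0$ in front of the volume energy, which is exactly the factor in the conclusion.

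The main obstacle will be the precise bookkeeping of the two eigenvalue scales in $A(u)$ alongside the polynomial degree $k$ and the penalty parameters $(\beta_0,\beta_1)$: the upper bounds on the cross-terms carry $\gamma^*$ while the penalty floor carries only $\gamma$, and the same cross-terms must simultaneously accommodate the factor $\beta_1^2$ from the second-derivative jump and the factor $k^2$ from the inverse estimate, all without sacrificing the precise form of $C(\beta_1)$ stated in the theorem. Provided the appendix inequalities are invoked with the correct scaling, substituting the resulting lower bound on $\mathbb{B}(u,u)$ into the energy identity and applying the initial-projection bound closes the argument.
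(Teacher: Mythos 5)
Your proposal follows essentially the same route as the paper's proof: the energy identity with $v=u$, the observation that the symmetric test-function flux collapses onto $\widehat{\nabla u}$ to produce the factor of $2$, the three-way split of the numerical flux, Young's inequality combined with the trace and inverse estimates of the appendix to absorb the cross terms into the $\beta_0\gamma/h_e$ penalty pool, conversion of the leftover $\|\nabla u\|^2_{L^2(K)}$ terms to the $A$-weighted energy via the eigenvalue floor $\gamma$, and finally time integration plus the initial-projection bound. The only cosmetic difference is that you describe the Young split with the $\epsilon$-weighted piece on the volume energy rather than on the jump term, but the tuning you propose ($\epsilon\sim\gamma\beta_0/(\gamma^* h)$ so the penalty remainders exactly cancel half the pool per cross term) is precisely what the paper does in \Cref{lemma:surf_avg} and \Cref{lemma:surf_jump_second_deriv}.
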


\begin{proof}
By setting $u=v$, \Cref{eqn:primal-weak-form} becomes
\begin{equation}\label{eqn:time-derive-ddgsym-stab}
    \frac{1}{2}\frac{d}{dt}\int_{\Omega}u^2 ~dxdy+\mathbb{B}(u,u)=0,
\end{equation}
where
\begin{equation}\label{eqn:bilinear-form-sym-ddg} 
    \mathbb{B}(u,u)=\int_{\Omega} A(u)\nabla u \cdot\nabla u ~dxdy 
+2\sum_{e\in\mathcal{E}_h}\int_{e}  \llbracket u \rrbracket\widehat{\nabla u}\cdot \bxi(\llkh u \rrkh)  ~ds=0.
\end{equation}
Note that
\begin{equation*} 
    \llbracket u \rrbracket\widehat{\nabla u}\cdot \bxi(\llkh u \rrkh)=\frac{\beta_0}{h} \llbracket u \rrbracket^2 \mathbf{n}\cdot\bxi(\llkh u \rrkh)+\llbracket u \rrbracket\llkh \nabla u \rrkh\cdot\bxi(\llkh u \rrkh)+\beta_1 h\llbracket u \rrbracket\llbracket\nabla (\nabla u\cdot\mathbf{n}) \rrbracket\cdot\bxi(\llkh u \rrkh).
\end{equation*}
Therefore, invoking \Cref{lemma:surf_jump,lemma:surf_avg,lemma:surf_jump_second_deriv}, we get
\begin{equation*}
    \begin{aligned}
        \sum_{e\in\mathcal{E}_h}\int_{e}  \llbracket u \rrbracket\widehat{\nabla u}\cdot \bxi&(\llkh u \rrkh)  ~ds \\
        &\geq \sum_{e\in\mathcal{E}_h}\frac{\gamma\beta_0}{h} \left\| \llbracket u \rrbracket \right\|^2_{L^2(e)} \\
        &-\sum_{e\in\mathcal{E}_h}\frac{\gamma\beta_0}{2h}\left\| \llbracket u \rrbracket \right\|^2_{L^2(e)} -\sum_{K\in\mathcal{T}_h} C_1\frac{(\gamma^*k)^2}{4\gamma\beta_0}\left\| \nabla u) \right\|^2_{L^2(K)} \\ 
        &- \sum_{e\in\mathcal{E}_h}\frac{\gamma\beta_0}{2h}\left\| \llbracket u \rrbracket \right\|^2_{L^2(e)} 
        -\sum_{K\in\mathcal{T}_h} C_2\frac{(\gamma^*\beta_1k)^2}{\gamma\beta_0} \left\|\nabla u \right\|^2_{L^2(K)} \\
        &=-\sum_{K\in\mathcal{T}_h} C_1\frac{(\gamma^*k)^2}{4\gamma\beta_0}\left\| \nabla u) \right\|^2_{L^2(K)}-\sum_{K\in\mathcal{T}_h} C_2\frac{(\gamma^*\beta_1k)^2}{\gamma\beta_0} \left\|\nabla u \right\|^2_{L^2(K)} \\
        &=-\sum_{K\in\mathcal{T}_h} \frac{1}{2}\left(\frac{C_1}{2\beta_1^2} + 2C_2\right)\frac{(\gamma^*\beta_1k)^2}{\gamma\beta_0}\left\| \nabla u) \right\|^2_{L^2(K)}. 
    \end{aligned}
\end{equation*}
This can be rewritten as
\begin{equation*}
    \sum_{e\in\mathcal{E}_h}\int_{e}  \llbracket u \rrbracket\widehat{\nabla u}\cdot \bxi(\llkh u \rrkh)  ~ds \geq -\frac{C(\beta_1)k^2}{2}\frac{(\gamma^*)^2}{\gamma} \frac{\beta_1^2}{\beta_0}\sum_K \left\| \nabla u) \right\|^2_{L^2(K)}.
\end{equation*}
where $C(\beta_1)=C_1/2\beta_1^2 + 2C_2$. Next, we use the assumption on the eigenvalues of $A(u)$:
\begin{equation*}
    \sum_K \left\| \nabla u) \right\|^2_{L^2(K)}\leq\frac{1}{\gamma}\sum_K \int_K \nabla u\cdot A(u)\nabla u ~dxdy,
\end{equation*} 
and obtain
\begin{equation}\label{eqn:symDDG_nablaU_norm}
    \sum_{e\in\mathcal{E}_h}\int_{e}  \llbracket u \rrbracket\widehat{\nabla u}\cdot \bxi(\llkh u \rrkh)  ~ds \geq -\frac{C(\beta_1)k^2}{2}\left(\frac{\gamma^*}{\gamma}\right)^2 \frac{\beta_1^2}{\beta_0} \sum_K \int_K \nabla u\cdot A(u)\nabla u ~dxdy. 
\end{equation}
Substituting this \Cref{eqn:symDDG_nablaU_norm} into \Cref{eqn:bilinear-form-sym-ddg}, and then, \Cref{eqn:bilinear-form-sym-ddg} into \Cref{eqn:time-derive-ddgsym-stab} gives
\begin{equation} \label{eqn:sym-ddg-final-dt}
    \frac{1}{2}\frac{d}{dt}\int_{\Omega}u^2 ~dxdy + \left(1-C(\beta_1)k^2\left(\frac{\gamma^*}{\gamma}\right)^2 \frac{\beta_1^2}{\beta_0} \right)\int_{\Omega} A(u)\nabla u \cdot\nabla u ~dxdy \leq 0.
\end{equation}
Lastly, we integrate \Cref{eqn:sym-ddg-final-dt} over $(0,T)$ and recall
\begin{equation*}
    \int_{\Omega}u^2(x,y,0)~dxdy \leq  \int_{\Omega} U_0^2(x,y)~dxdy, 
\end{equation*}
from the proof of \Cref{thm:stab-nonsym-ddg}. This completes the proof provided that we have $\beta_0 \geq C(\beta_1)k^2\left(\frac{\gamma^*}{\gamma}\right)^2 \beta_1^2$.
\end{proof}

\begin{theorem}[Stability of DDGIC method]\label{thm:stab-ddgic}
Assume that $A(u)$ is a positive definite matrix and there exists $\gamma,\gamma^*\in\mathbb{R}$ such that the eigenvalues $(\gamma_1,\gamma_2)$ of $A(u)$ lie between $[\gamma,\gamma^*]$ for $\forall u\in\VhK$. Furthermore, let the model parameter $\sigma=1$ in the scheme formulation \Cref{eqn:new-ddg-scheme} that is equipped with the numerical flux for the gradient of the numerical solution  \Cref{eqn:ddg-numflux} and the numerical flux for the gradient of the test function \Cref{eqn:ddgic-test-function-numerical-flux}. If $\beta_0 \geq C(\beta_1)k^2\left(\frac{\gamma^*}{\gamma}\right)^2 \beta_1^2$, then we have
\begin{equation*}
    \begin{aligned}
        \frac{1}{2}\int_{\Omega}u^2(x,y,T) ~dxdy + \left(1-C(\beta_1)k^2\left(\frac{\gamma^*}{\gamma}\right)^2 \frac{\beta_1^2}{\beta_0} \right)\int_0^T&\int_{\Omega} A(u)\nabla u \cdot\nabla u ~dxdy\,dt \\
        &\leq \frac{1}{2}\int_{\Omega}U^2_0(x,y) ~dxdy.
    \end{aligned}
\end{equation*}
where $C(\beta_1)=C_1/\beta_1^2 + C_2>0$ and $C_1,C_2>0$ are constants.
\end{theorem}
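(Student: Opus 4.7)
The proof of \Cref{thm:stab-ddgic} follows the same blueprint as the stability analysis of the symmetric DDG method in \Cref{thm:stab-sym-ddg}, since the DDGIC test-function flux $\widetilde{\nabla v}=\llkh\nabla v\rrkh$ is precisely the consistent, symmetric part of the symmetric DDG flux, with the jump-penalty and second-derivative corrections dropped. The plan is to set $v=u$ in the primal weak form \Cref{eqn:primal-weak-form}, recognize the left-hand side as the time derivative of $\tfrac{1}{2}\|u\|^2_{L^2(\Omega)}+\mathbb{B}(u,u)$, and then produce a lower bound for $\mathbb{B}(u,u)$ that controls a fraction of the viscous energy $\int_\Omega A(u)\nabla u\cdot\nabla u\,dxdy$.

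With $\sigma=1$ and \Cref{eqn:ddgic-test-function-numerical-flux}, the bilinear form evaluated on the diagonal reduces to
\begin{equation*}
    \mathbb{B}(u,u)=\int_{\Omega} A(u)\nabla u\cdot\nabla u\,dxdy
    +\sum_{e\in\mathcal{E}_h}\int_{e}\llbracket u\rrbracket\bigl(\widehat{\nabla u}+\llkh\nabla u\rrkh\bigr)\cdot\bxi(\llkh u\rrkh)\,ds.
\end{equation*}
Substituting the DDG flux formula \Cref{eqn:ddg-numflux} splits the surface integrand into three pieces: a nonnegative jump-penalty piece proportional to $\beta_0\llbracket u\rrbracket^2/h_e$, an average-gradient cross term of the form $2\llbracket u\rrbracket\llkh\nabla u\rrkh\cdot\bxi$, and a second-derivative cross term of the form $\beta_1 h_e\llbracket u\rrbracket\llbracket\nabla(\nabla u\cdot\mathbf n)\rrbracket\cdot\bxi$. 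The key structural difference with the symmetric DDG calculation is that the jump-penalty and second-derivative contributions retain their original weights, while the average-gradient contribution is still doubled; this asymmetry is what reshapes the constants in the final estimate.

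I would then apply \Cref{lemma:surf_jump} to extract the positive lower bound $\gamma\beta_0/h_e\,\|\llbracket u\rrbracket\|^2_{L^2(e)}$, and apply \Cref{lemma:surf_avg,lemma:surf_jump_second_deriv} to absorb the two cross terms into a half of the jump-penalty term plus a small multiple of $\sum_{K}\|\nabla u\|^2_{L^2(K)}$. The spectral assumption $\gamma\sum_K\|\nabla u\|^2_{L^2(K)}\le\sum_K\int_K\nabla u\cdot A(u)\nabla u\,dxdy$ then converts the leftover gradient-norm remainder into a fraction of the viscous energy, yielding the differential inequality
\begin{equation*}
    \frac{1}{2}\frac{d}{dt}\int_\Omega u^2\,dxdy+\left(1-C(\beta_1)k^2\Bigl(\frac{\gamma^*}{\gamma}\Bigr)^2\frac{\beta_1^2}{\beta_0}\right)\int_\Omega A(u)\nabla u\cdot\nabla u\,dxdy\leq 0,
\end{equation*}
which I integrate over $(0,T)$. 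The initial-data bound from the Schwarz-inequality argument already used at the end of the proof of \Cref{thm:stab-nonsym-ddg} will then finish the proof, provided the stated threshold on $\beta_0$ holds.

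The main obstacle is careful bookkeeping of coefficients so that $C(\beta_1)$ emerges in the DDGIC-specific form $C_1/\beta_1^2+C_2$ rather than the symmetric-DDG form $C_1/(2\beta_1^2)+2C_2$. Since the average-gradient cross term is doubled but the jump-penalty and second-derivative terms are not, I would split the positive $\gamma\beta_0/h_e\,\|\llbracket u\rrbracket\|^2$ contribution into two halves with different weights: one half absorbs the doubled average cross term via Young's inequality with a weight tuned to deliver the $C_2$ constant without an extra factor of $2$, and the other half absorbs the un-doubled second-derivative cross term via Young's inequality with a weight tuned to deliver $C_1/\beta_1^2$ instead of $C_1/(2\beta_1^2)$. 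Once this weight allocation is set up correctly, the rest of the estimate proceeds line-for-line as in \Cref{thm:stab-sym-ddg}.
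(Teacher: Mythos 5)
Your proposal matches the paper's proof essentially line for line: the same diagonal substitution $v=u$, the same three-way splitting of the interface integrand into penalty, doubled average, and second-derivative pieces, the same use of \Cref{lemma:surf_jump,lemma:surf_avg,lemma:surf_jump_second_deriv} (the paper absorbs the doubled average term with the sharper second estimate in \Cref{lemma:surf_avg}, exactly the re-weighted Young's inequality you describe), and the same closing steps recycled from \Cref{thm:stab-sym-ddg} and \Cref{thm:stab-nonsym-ddg}. The only discrepancy is cosmetic: in the paper the $1/\beta_1^2$ factor in $C(\beta_1)$ arises from the average-gradient term (whose bound carries no intrinsic $\beta_1^2$) while the plain constant comes from the second-derivative term, so your attribution of $C_1$ and $C_2$ is swapped relative to the paper's convention, but since these are generic positive constants this does not affect correctness.
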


\begin{proof}
The proof is very similar to the proof of \Cref{thm:stab-sym-ddg}. Therefore, we will only lay out the sketch of the proof. By setting $u=v$, \Cref{eqn:primal-weak-form} becomes
\begin{equation*}\label{eqn_time_derive_ddgic_stab}
    \frac{1}{2}\frac{d}{dt}\int_{\Omega}u^2 ~dxdy+\mathbb{B}(u,u)=0,
\end{equation*}
where
\begin{equation*}\label{eqn:bilinear-form-ddgic} 
    \mathbb{B}(u,u)=\int_{\Omega} A(u)\nabla u \cdot\nabla u ~dxdy + \sum_{e\in\mathcal{E}_h}\int_{e}  \llbracket u \rrbracket\left(\widehat{\nabla u}+\llkh\nabla u\rrkh\right)\cdot \bxi(\llkh u \rrkh)  ~ds=0.
\end{equation*}
Note that 
\begin{equation*}
    \begin{aligned}
        \llbracket u \rrbracket\left(\widehat{\nabla u}+\llkh\nabla u\rrkh\right)\cdot \bxi(\llkh u \rrkh)  &= \frac{\beta_0}{h} \llbracket u \rrbracket^2 \mathbf{n}\cdot\bxi(\llkh u \rrkh) \\
        &+2\llbracket u \rrbracket\llkh \nabla u \rrkh\cdot\bxi(\llkh u \rrkh)+\beta_1 h\llbracket u \rrbracket\llbracket\nabla (\nabla u\cdot\mathbf{n}) \rrbracket\cdot\bxi(\llkh u \rrkh).
    \end{aligned}
\end{equation*}
As in the proof of \Cref{thm:stab-sym-ddg}, we invoke \Cref{lemma:surf_jump,lemma:surf_avg,lemma:surf_jump_second_deriv}:
\begin{equation*}
    \begin{aligned}
        \sum_{e\in\mathcal{E}_h}\int_{e}  \llbracket u \rrbracket\left(\widehat{\nabla u}+\llkh\nabla u\rrkh\right)\cdot \bxi&(\llkh u \rrkh) ~ds \\
        &\geq \sum_{e\in\mathcal{E}_h}\frac{\gamma\beta_0}{h} \left\| \llbracket u \rrbracket \right\|^2_{L^2(e)} \\
        &-\sum_{e\in\mathcal{E}_h}\frac{\gamma\beta_0}{2h}\left\| \llbracket u \rrbracket \right\|^2_{L^2(e)} -\sum_{K\in\mathcal{T}_h} C_1\frac{(\gamma^*k)^2}{\gamma\beta_0}\left\| \nabla u) \right\|^2_{L^2(K)} \\ 
        &- \sum_{e\in\mathcal{E}_h}\frac{\gamma\beta_0}{2h}\left\| \llbracket u \rrbracket \right\|^2_{L^2(e)} 
        -\sum_{K\in\mathcal{T}_h} C_2\frac{(\gamma^*\beta_1k)^2}{\gamma\beta_0} \left\|\nabla u \right\|^2_{L^2(K)} \\
        &=-C(\beta_1)k^2\frac{(\gamma^*)^2}{\gamma} \frac{\beta_1^2}{\beta_0}\sum_K \left\| \nabla u) \right\|^2_{L^2(K)},
    \end{aligned}
\end{equation*}
where $C(\beta_1)=C_1/\beta_1^2 + C_2$. Then, following the same lines of steps as in the proof of \Cref{thm:stab-sym-ddg} will lead to the desired result.
\end{proof}

\section{Numerical Examples}\label{sec:numerical-examples}

In this section, the results of several numerical examples are presented. All results are obtained on a square domain $\Omega=[x_0,x_0+L]\times[y_0,y_0+L]$ where we denote by $x_0,y_0$ the origin of the $(x,y)$ coordinate system, and $L$ is the domain size in the x and y directions. A set of uniform triangular meshes is used to discretize the computational domain $\Omega$ as shown in \Cref{fig:mesh_set}. In all simulations, we set $\beta_0=(k+1)^2$, $\beta_{0v}=\frac{\beta_0}{2}$ and $\beta_1=\frac{1}{2k(k+1)}$ in the numerical flux definitions, and the time integration is performed by a third order explicit strong stability-preserving (SSP) Runge-Kutta scheme \cite{Shu-Osher-1988}. Unless stated otherwise, the time step size $\Delta t$ is determined by the following Courant-Friedrichs-Levy (CFL) rule:
\begin{equation}\label{eqn:cfl-condition}
    \Delta t\frac{\mu}{\min_K{h^2_K}}<\omega\lambda,
\end{equation}
where $\lambda$ is the CFL number, $\mu$ is a diffusion constant and $\omega$ is the minimum quadrature weight for the volume integration developed in \cite{ZhangXiangxiong2012MaPH}.

Furthermore, the convergence rates are reported only in the $L_2$ and $L_\infty$ norms. To do that, we employ the $(k+1)th$ order quadrature rule to calculate the $L_2$-error while the $L_\infty$-errors are measured using $361$ points generated by the same quadrature rule in each element. 

\begin{figure}[h!]
	\centering
	\begin{subfigure}[b]{0.23\textwidth}
		\centering
		\includegraphics[scale=0.17,trim={2cm 0 2cm 0},clip]{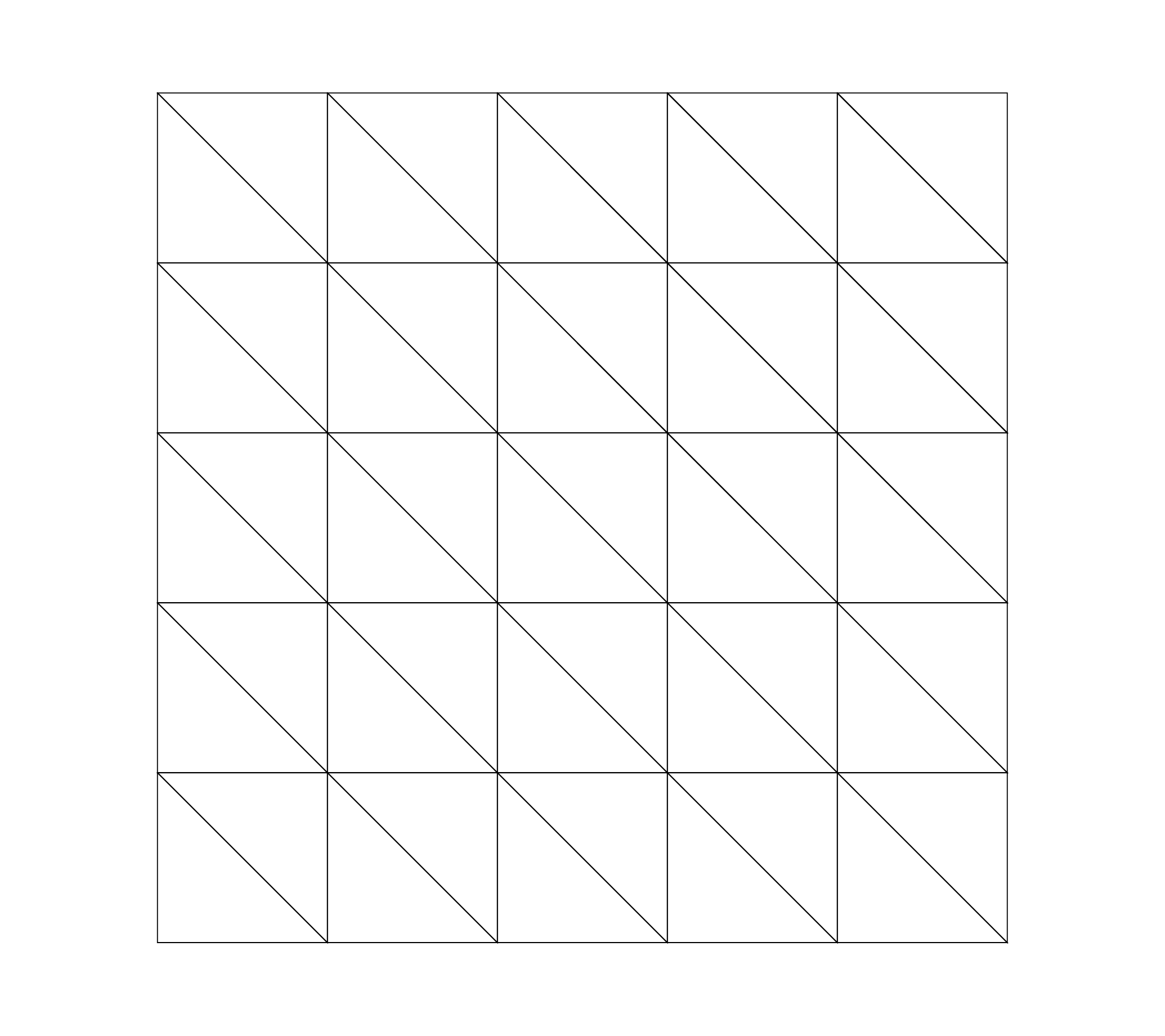}
		\caption{$h$}
	\end{subfigure}
	\begin{subfigure}[b]{0.23\textwidth}
		\centering
		\includegraphics[scale=0.17,trim={2cm 0 2cm 0},clip]{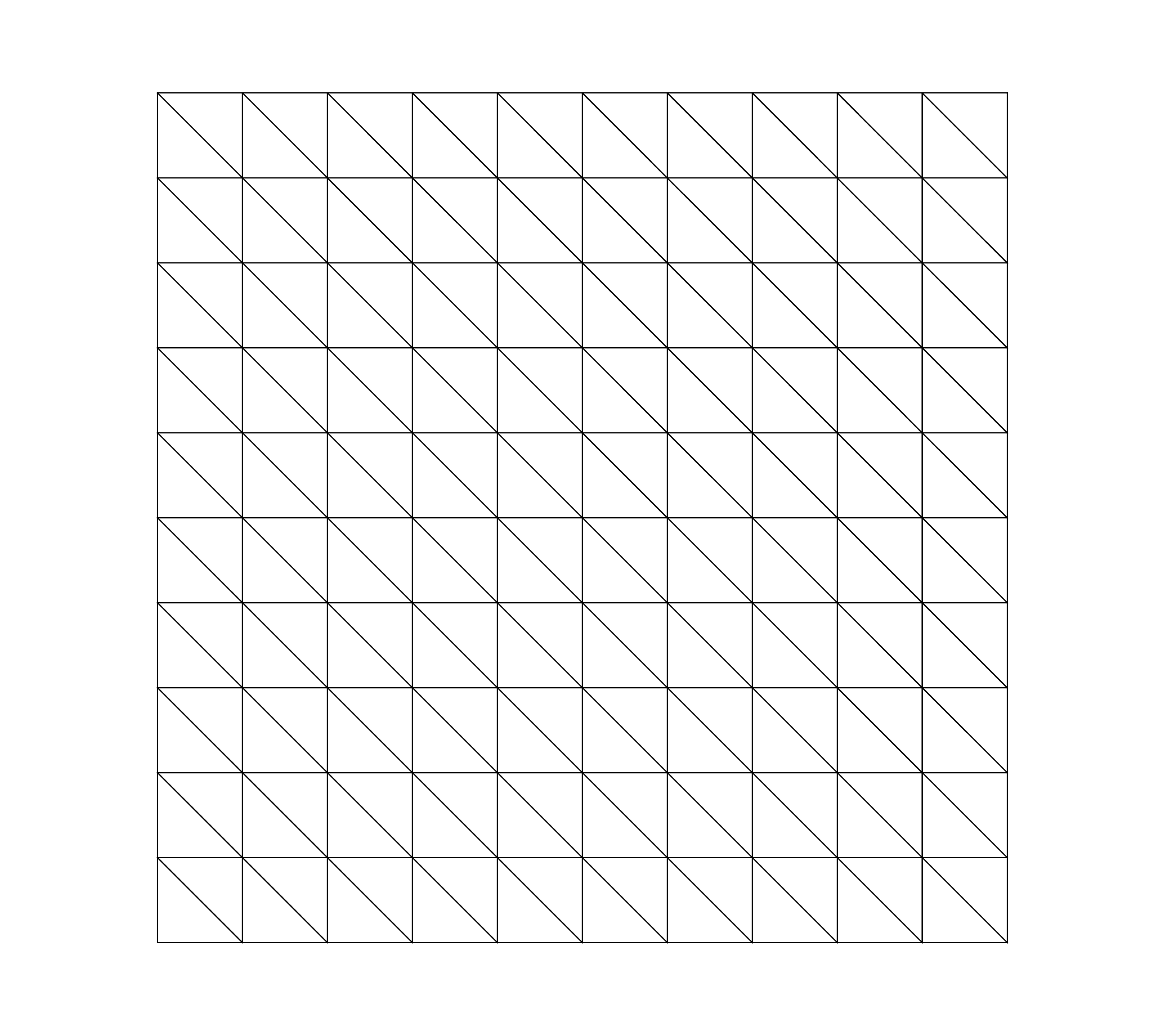}
		\caption{$h/2$}
	\end{subfigure}
	\begin{subfigure}[b]{0.23\textwidth}
		\centering
		\includegraphics[scale=0.17,trim={2cm 0 2cm 0},clip]{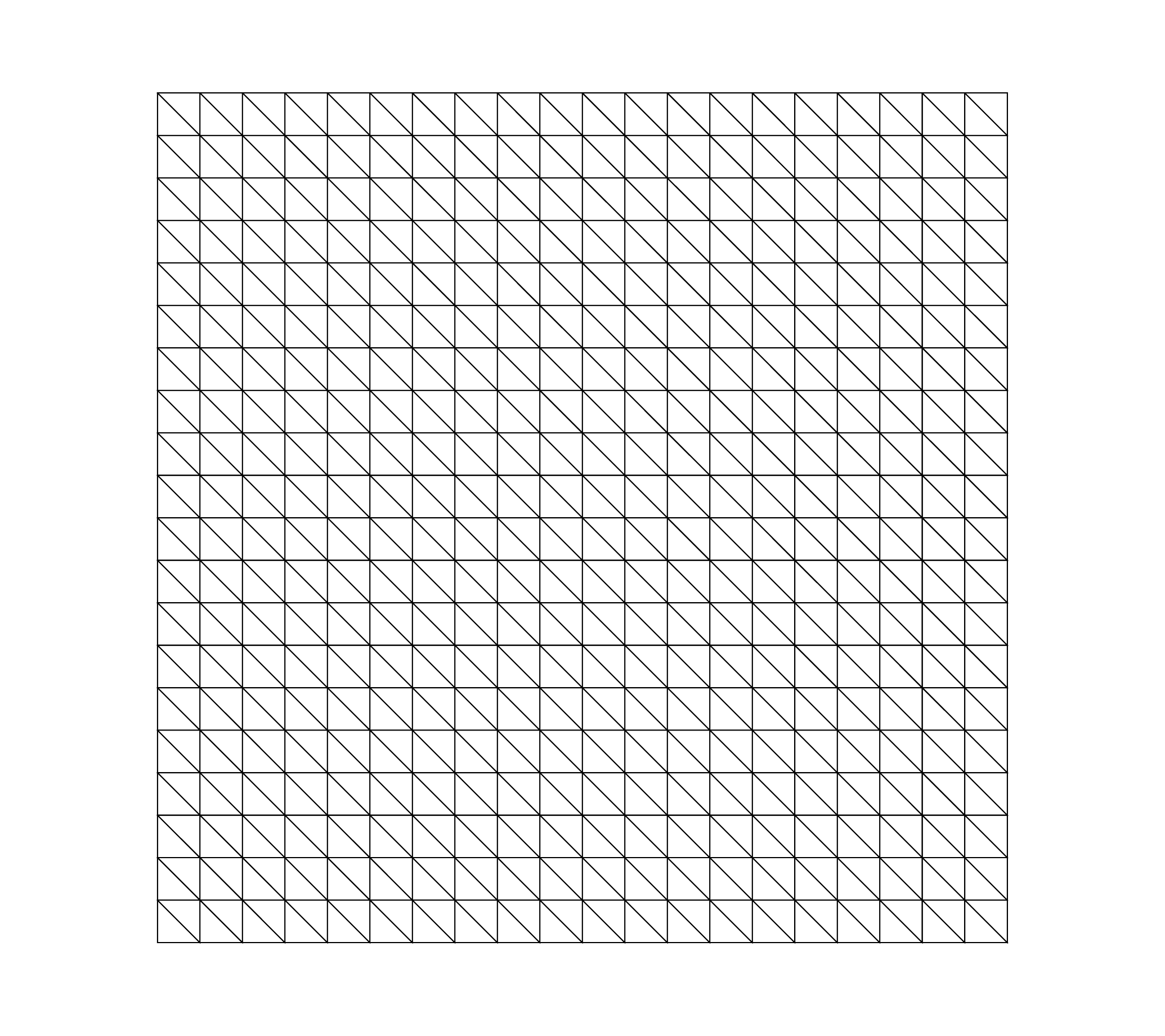}
		\caption{$h/4$}
	\end{subfigure}
	\begin{subfigure}[b]{0.23\textwidth}
		\centering
		\includegraphics[scale=0.17,trim={2cm 0 2cm 0},clip]{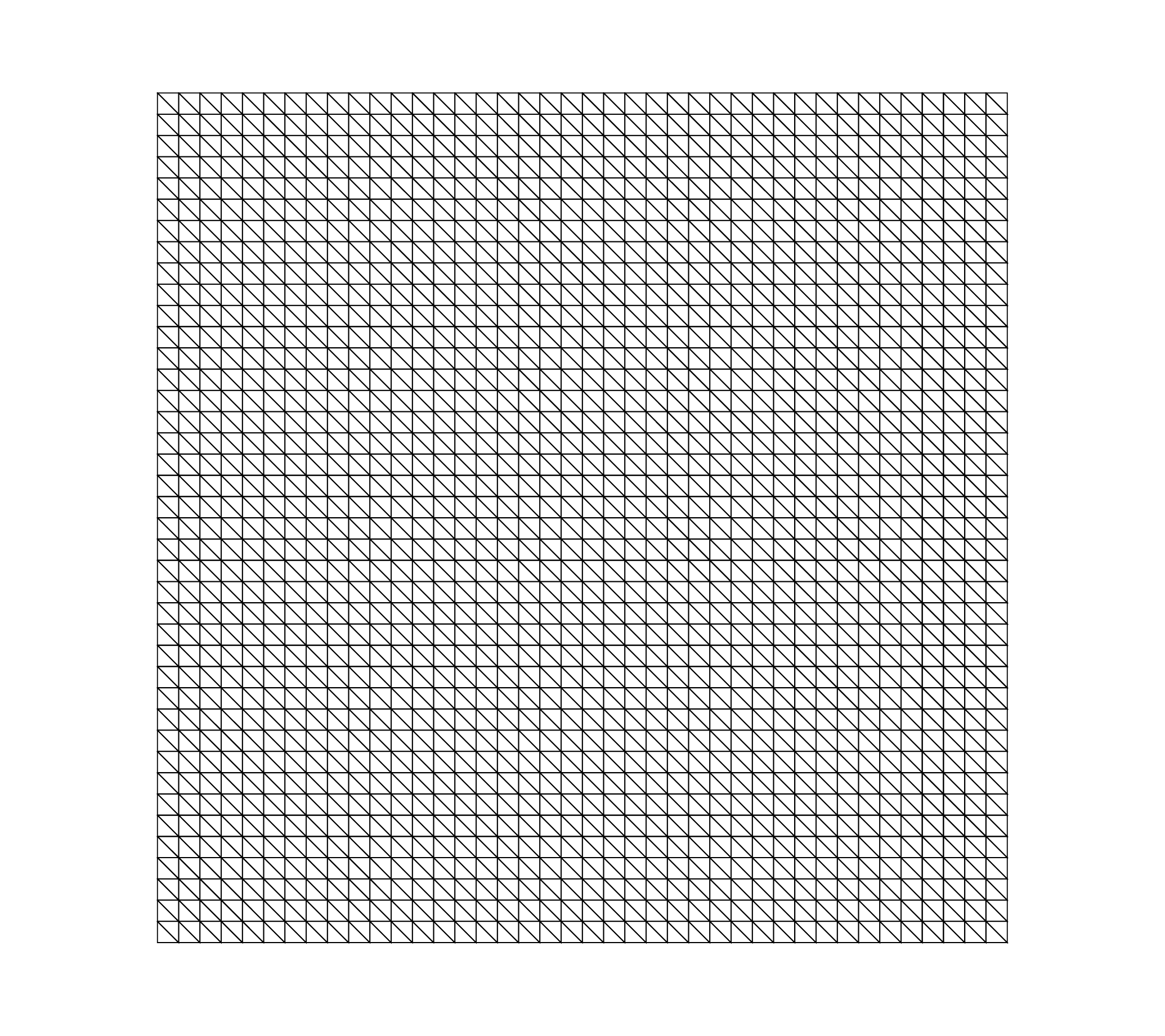}
		\caption{$h/8$}
	\end{subfigure}
	\caption{The set of uniform unstructured meshes with $h=L/5$}\label{fig:mesh_set}
\end{figure}
\vspace{0.5cm}
\hypertarget{example:heat-equation}{}
\noindent\textbf{\ExampleHeatEqn.} 
In this example, we consider the heat equation
\begin{equation*}
	\frac{\partial U}{\partial t}=\mu\Delta U,
\end{equation*}
with periodic boundary conditions on $\Omega=[0,1]\times[0,1]$ where $\mu$ is a constant diffusion coefficient. Note that the diffusion matrix $A(u)$ is given as
$$
A(u)=\mu\mathbb{I},
$$
where $\mathbb{I}\in\mathbb{R}^{2\times2}$ is the identity matrix. The initial condition for this example is obtained from the following exact solution at $t=0$: 
\begin{equation*}
    U(x,y,t)=e^{-8\pi^2\mu t}\cos\left(2\pi(x+y)\right).
\end{equation*}
In this example, we set $\lambda=0.1$, $\mu=0.01$ and $T=1$. In addition, all quadrature rules are exact up to polynomials of degree $2k+1$.

The $L_2$ and $L_\infty$ errors are listed in \Cref{table:heat-equation-L2,table:heat-equation-Linf}, respectively. We observe that optimal $(k+1)th$ order convergence in the $L_2$-norm is obtained by the DDGIC and symmetric DDG methods without any order loss for even degree polynomials. On the other hand, the nonsymmetric DDG method is optimally convergent only for $k=3$ in the $L_2$-norm. For even degree polynomials, the convergence rate of the nonsymmetric DDG method is degrading as the mesh is refined. However, in the $L_\infty$-norm, all DDG methods demonstrate $(k+1)th$ order optimal convergence.  

\begin{table}[h!]
    \centering
    \caption{$L_2$ errors for \protect\hyperlink{example:heat-equation}{\ExampleHeatEqn} at $T=1$}\label{table:heat-equation-L2}
    \begin{tabular}{cccccccc}
        \hlineB{3}
              & \multicolumn{7}{c}{$L_2$ errors and orders for \textit{the DDGIC method}}          \\ \cline{2-8} 
              & $h$      & $h/2$    & Order & $h/4$    & Order & $h/8$    & Order \\ \hline
        $k=2$ & 2.47E-03 & 3.10E-04 & 3.00  & 3.88E-05 & 3.00  & 4.85E-06 & 3.00  \\
        $k=3$ & 1.87E-04 & 1.17E-05 & 4.00  & 7.29E-07 & 4.00  & 4.55E-08 & 4.00  \\
        $k=4$ & 1.16E-05 & 3.64E-07 & 5.00  & 1.14E-08 & 5.00  & 3.55E-10 & 5.00  \\ \hlineB{2}
              &          &          &       &          &       &          &       \\ \hlineB{2}
              & \multicolumn{7}{c}{$L_2$ errors and orders for \textit{the symmetric DDG method}}      \\ \cline{2-8} 
              & $h$      & $h/2$    & Order & $h/4$    & Order & $h/8$    & Order \\ \hline
        $k=2$ & 2.82E-03 & 3.55E-04 & 2.99  & 4.45E-05 & 2.99  & 5.56E-06 & 3.00  \\
        $k=3$ & 2.10E-04 & 1.30E-05 & 4.01  & 8.10E-07 & 4.00  & 5.06E-08 & 4.00  \\
        $k=4$ & 1.28E-05 & 4.02E-07 & 4.99  & 1.26E-08 & 5.00  & 3.93E-10 & 5.00  \\ \hlineB{2}
              &          &          &       &          &       &          &       \\ \hlineB{2}
              & \multicolumn{7}{c}{$L_2$ errors and orders for \textit{the nonsymmetric DDG method}}      \\ \cline{2-8} 
              & $h$      & $h/2$    & Order & $h/4$    & Order & $h/8$    & Order \\ \hline
        $k=2$ & 2.29E-03 & 2.88E-04 & 2.99  & 3.73E-05 & 2.95  & 5.20E-06 & 2.84  \\
        $k=3$ & 1.85E-04 & 1.15E-05 & 4.01  & 7.21E-07 & 4.00  & 4.50E-08 & 4.00  \\
        $k=4$ & 1.13E-05 & 3.66E-07 & 4.95  & 1.26E-08 & 4.86  & 5.12E-10 & 4.62  \\ \hlineB{3}
    \end{tabular}    
\end{table}

\begin{table}[h!]
    \centering
    \caption{$L_\infty$ errors for \protect\hyperlink{example:heat-equation}{\ExampleHeatEqn} at $T=1$}\label{table:heat-equation-Linf}
    \begin{tabular}{cccccccc}
        \hlineB{3}
              & \multicolumn{7}{c}{$L_\infty$ errors and orders for \textit{the DDGIC method}}          \\ \cline{2-8} 
              & $h$      & $h/2$    & Order & $h/4$    & Order & $h/8$    & Order \\ \hline
        $k=2$ & 8.58E-03 & 1.03E-03 & 3.06  & 1.31E-04 & 2.97  & 1.65E-05 & 2.99  \\
        $k=3$ & 7.77E-04 & 5.40E-05 & 3.85  & 3.43E-06 & 3.98  & 2.17E-07 & 3.98  \\
        $k=4$ & 4.63E-05 & 1.35E-06 & 5.10  & 4.23E-08 & 4.99  & 1.32E-09 & 5.00  \\ \hlineB{2}
              &          &          &       &          &       &          &       \\ \hlineB{2}
              & \multicolumn{7}{c}{$L_\infty$ errors and orders for \textit{the symmetric DDG method}}      \\ \cline{2-8} 
              & $h$      & $h/2$    & Order & $h/4$    & Order & $h/8$    & Order \\ \hline
        $k=2$ & 6.15E-03 & 7.01E-04 & 3.13  & 8.96E-05 & 2.97  & 1.13E-05 & 2.99  \\
        $k=3$ & 6.18E-04 & 4.36E-05 & 3.83  & 2.83E-06 & 3.94  & 1.79E-07 & 3.98  \\
        $k=4$ & 3.39E-05 & 1.01E-06 & 5.07  & 3.26E-08 & 4.96  & 1.03E-09 & 4.99  \\ \hlineB{2}
              &          &          &       &          &       &          &       \\ \hlineB{2}
              & \multicolumn{7}{c}{$L_\infty$ errors and orders for \textit{the nonsymmetric DDG method}}      \\ \cline{2-8} 
              & $h$      & $h/2$    & Order & $h/4$    & Order & $h/8$    & Order \\ \hline
        $k=2$ & 1.14E-02 & 1.39E-03 & 3.03  & 1.83E-04 & 2.92  & 2.32E-05 & 2.98  \\
        $k=3$ & 9.58E-04 & 6.56E-05 & 3.87  & 4.10E-06 & 4.00  & 2.59E-07 & 3.98  \\
        $k=4$ & 6.04E-05 & 1.86E-06 & 5.02  & 6.08E-08 & 4.94  & 1.96E-09 & 4.96  \\ \hlineB{3}
    \end{tabular}
\end{table}

\vspace{0.5cm}
\hypertarget{example:anisotropic-linear-diffusion}{}
\noindent\textbf{\ExampleAnisoDiff.} 
In this numerical test, we now consider an anisotropic diffusion equation with mixed derivatives
\begin{equation*}
    \frac{\partial U}{\partial t}=\mu\left(2U_{xx}+3U_{xy}+3U_{yy}\right),
\end{equation*}
with periodic boundary conditions on $\Omega=[0,1]\times[0,1]$. Note that this equation is still linear, i.e. diffusion matrix $A(u)$ is a constant coefficient matrix. Moreover, it can be written in a nonsymmetric form
\begin{equation*}
    A(u)=
    \mu\begin{pmatrix}
    	2 & 1 \\
    	2 & 3
    \end{pmatrix}.
\end{equation*} 
A nonsymmetric diffusion matrix is chosen to see how the new DDG methods would behave in such a setting. The initial condition is set from the following exact solution at $t=0$:
\begin{equation}
    U(x,y,t)=e^{-32\pi^2\mu t}\cos(2\pi y)\cos(4\pi x-2\pi y).
\end{equation}
As in the previous example, we set $\lambda=0.1$, $\mu=0.01$, $T=1$, and all quadrature rules are exact up to polynomials of degree $2k+1$.   

The $L_2$ and $L_\infty$ errors are listed in \Cref{table:anisotropic-linear-diffusion-L2,table:anisotropic-linear-diffusion-Linf}, respectively. The DDGIC and symmetric DDG methods behave similarly in all cases. For the even degree polynomials, these methods do not lose order. On the other hand, the nonsymmetric DDG method demonstrates optimal $(k+1)$ convergence only for $k=3$ while it loses an order for even degree polynomials, which is clearer compared to \hyperlink{example:heat-equation}{\ExampleHeatEqn}. Furthermore, the order loss is not only seen in the $L_2$-error but also clearly observed in the $L_\infty$-error in this case.

\begin{remark}
This problem has been also tested equivalently with the diffusion matrix
$$
A(u)=
\mu\begin{pmatrix}
	2 & 1.5 \\
	1.5 & 3
\end{pmatrix},
$$
which is symmetric-positive definite. We note that the performance of the new DDG methods does not change with this diffusion matrix and the same results in \Cref{table:anisotropic-linear-diffusion-L2,table:anisotropic-linear-diffusion-Linf} are obtained. Therefore, those results are not reported.
\end{remark}


\begin{table}[h!]
    \centering
    \caption{$L_2$ errors for \protect\hyperlink{example:anisotropic-linear-diffusion}{\ExampleAnisoDiff} at $T=1$}\label{table:anisotropic-linear-diffusion-L2}
    \begin{tabular}{cccccccc}
        \hlineB{3}
              & \multicolumn{7}{c}{$L_2$ errors and orders for \textit{the DDGIC method}}          \\ \cline{2-8} 
              & $h$      & $h/2$    & Order & $h/4$    & Order & $h/8$    & Order \\ \hline
        $k=2$ & 1.34E-02 & 2.59E-03 & 2.37  & 2.14E-04 & 3.60  & 1.76E-05 & 3.61  \\
        $k=3$ & 3.37E-03 & 1.16E-04 & 4.87  & 5.31E-06 & 4.45  & 3.12E-07 & 4.09  \\
        $k=4$ & 4.62E-04 & 1.19E-05 & 5.29  & 3.73E-07 & 4.99  & 1.18E-08 & 4.99  \\ \hlineB{2}
              &          &          &       &          &       &          &       \\ \hlineB{2}
              & \multicolumn{7}{c}{$L_2$ errors and orders for \textit{the symmetric DDG method}}      \\ \cline{2-8} 
              & $h$      & $h/2$    & Order & $h/4$    & Order & $h/8$    & Order \\ \hline
        $k=2$ & 1.41E-02 & 3.01E-03 & 2.22  & 2.56E-04 & 3.56  & 1.96E-05 & 3.71  \\
        $k=3$ & 3.68E-03 & 1.27E-04 & 4.86  & 5.51E-06 & 4.53  & 3.20E-07 & 4.11  \\
        $k=4$ & 4.92E-04 & 1.22E-05 & 5.33  & 3.88E-07 & 4.98  & 1.22E-08 & 4.99  \\ \hlineB{2}
              &          &          &       &          &       &          &       \\ \hlineB{2}
              & \multicolumn{7}{c}{$L_2$ errors and orders for \textit{the nonsymmetric DDG method}}      \\ \cline{2-8} 
              & $h$      & $h/2$    & Order & $h/4$    & Order & $h/8$    & Order \\ \hline
        $k=2$ & 1.17E-02 & 2.47E-03 & 2.25  & 4.34E-04 & 2.51  & 9.21E-05 & 2.24  \\
        $k=3$ & 3.67E-03 & 2.00E-04 & 4.20  & 1.07E-05 & 4.23  & 6.35E-07 & 4.07  \\
        $k=4$ & 5.70E-04 & 1.15E-05 & 5.63  & 5.70E-07 & 4.34  & 3.47E-08 & 4.04  \\ \hlineB{3}
    \end{tabular}
\end{table}

\begin{table}[h!]
    \centering
    \caption{$L_\infty$ errors for \protect\hyperlink{example:anisotropic-linear-diffusion}{\ExampleAnisoDiff} at $T=1$}\label{table:anisotropic-linear-diffusion-Linf}
    \begin{tabular}{cccccccc}
        \hlineB{3}
              & \multicolumn{7}{c}{$L_\infty$ errors and orders for \textit{the DDGIC method}}          \\ \cline{2-8} 
              & $h$      & $h/2$    & Order & $h/4$    & Order & $h/8$    & Order \\ \hline
        $k=2$ & 2.28E-02 & 4.33E-03 & 2.40  & 4.03E-04 & 3.43  & 4.04E-05 & 3.32  \\
        $k=3$ & 7.81E-03 & 3.68E-04 & 4.41  & 2.18E-05 & 4.08  & 1.41E-06 & 3.96  \\
        $k=4$ & 1.30E-03 & 3.48E-05 & 5.22  & 1.22E-06 & 4.84  & 3.97E-08 & 4.94  \\ \hlineB{2}
              &          &          &       &          &       &          &       \\ \hlineB{2}
              & \multicolumn{7}{c}{$L_\infty$ errors and orders for \textit{the symmetric DDG method}}      \\ \cline{2-8} 
              & $h$      & $h/2$    & Order & $h/4$    & Order & $h/8$    & Order \\ \hline
        $k=2$ & 2.38E-02 & 4.96E-03 & 2.26  & 4.71E-04 & 3.40  & 4.54E-05 & 3.37  \\
        $k=3$ & 8.33E-03 & 4.03E-04 & 4.37  & 2.03E-05 & 4.31  & 1.28E-06 & 3.99  \\
        $k=4$ & 1.37E-03 & 3.61E-05 & 5.25  & 1.27E-06 & 4.82  & 4.17E-08 & 4.94  \\ \hlineB{2}
              &          &          &       &          &       &          &       \\ \hlineB{2}
              & \multicolumn{7}{c}{$L_\infty$ errors and orders for \textit{the nonsymmetric DDG method}}      \\ \cline{2-8} 
              & $h$      & $h/2$    & Order & $h/4$    & Order & $h/8$    & Order \\ \hline
        $k=2$ & 2.13E-02 & 4.40E-03 & 2.28  & 8.01E-04 & 2.46  & 1.69E-04 & 2.25  \\
        $k=3$ & 8.39E-03 & 5.87E-04 & 3.84  & 4.10E-05 & 3.84  & 2.66E-06 & 3.95  \\
        $k=4$ & 1.49E-03 & 3.44E-05 & 5.43  & 1.50E-06 & 4.52  & 6.94E-08 & 4.44  \\ \hlineB{3}
    \end{tabular}
\end{table}

\vspace{0.5cm}
\hypertarget{example:porous-medium-equation-manuf}{}
\noindent\textbf{\ExamplePorousManuf.} 
In this example, we consider \textit{the porous medium equation}
\begin{equation}\label{eqn:porous-medium-equation}
    \frac{\partial U}{\partial t}=\mu\Delta(U^\gamma),
\end{equation}
where $\gamma$ is a model parameter. Note that this equation models a nonlinear diffusion process for $\gamma\ne1$, i.e. coefficients of the diffusion matrix $A(u)$ are functions of $u$:
\begin{equation*}
    A(u)=
    \mu\begin{pmatrix}
    	\gamma u^{\gamma-1} & 0 \\
    	0 & \gamma u^{\gamma-1}
    \end{pmatrix}=\mu\gamma u^{\gamma-1}\mathbb{I},
\end{equation*}
where $\mathbb{I}\in\mathbb{R}^{2\times2}$ is the identity matrix. Notice that the diffusion matrix is diagonal, but for $\gamma>1$, \Cref{eqn:porous-medium-equation} becomes highly nonlinear. In order to assess the performance of the new DDG methods in a highly nonlinear diffusion problem and measure the convergence rate, we solve \Cref{eqn:porous-medium-equation} on $\Omega=[0,1]\times[0,1]$ with $\gamma=3$ and employ the method of manufactured solutions by enforcing the solution
\begin{equation*}
    U(x,y,t)=e^{-8\pi^2\mu t}\sin(2\pi(x+y)).
\end{equation*}
The initial condition for this problem is obtained from this manufactured solution at $t=0$. Also, note that the boundary conditions are periodic. Furthermore, we set $\lambda=0.1$, $\mu=0.01$, $T=1$, and employ quadrature rules that are exact up to polynomials of degree $4k+1$.

The $L_2$ and $L_\infty$ errors are listed in \Cref{table:porous-medium-L2,table:porous-medium-Linf}, respectively. We observe that all DDG methods converge with optimal $(k+1)th$ order accuracy for all reported cases. Although this problem is highly nonlinear, the optimal convergence might be due to the diagonal structure of the nonlinear diffusion matrix $A(u)$.
\begin{table}[h!]
    \centering
    \caption{$L_2$ errors for \protect\hyperlink{example:porous-medium-equation-manuf}{\ExamplePorousManuf} at $T=1$}\label{table:porous-medium-L2}
    \begin{tabular}{cccccc}
        \hlineB{3}
              & \multicolumn{5}{c}{$L_2$ errors and orders for \textit{the DDGIC method}}          \\ \cline{2-6} 
              & $h$      & $h/2$    & Order & $h/4$    & Order \\ \hline
        $k=2$ & 3.19E-03 & 4.02E-04 & 2.99  & 5.00E-05 & 3.01  \\
        $k=3$ & 2.46E-04 & 1.29E-05 & 4.26  & 7.48E-07 & 4.10  \\
        $k=4$ & 1.74E-05 & 5.63E-07 & 4.95  & 1.54E-08 & 5.20  \\ \hlineB{2}
              &          &          &       &          &       \\ \hlineB{2}
              & \multicolumn{5}{c}{$L_2$ errors and orders for \textit{the symmetric DDG method}}      \\ \cline{2-6} 
              & $h$      & $h/2$    & Order & $h/4$    & Order \\ \hline
        $k=2$ & 3.26E-03 & 4.37E-04 & 2.90  & 5.64E-05 & 2.95  \\
        $k=3$ & 2.87E-04 & 1.43E-05 & 4.33  & 8.33E-07 & 4.10  \\
        $k=4$ & 1.85E-05 & 6.12E-07 & 4.91  & 1.69E-08 & 5.18  \\ \hlineB{2}
              &          &          &       &          &       \\ \hlineB{2}
              & \multicolumn{5}{c}{$L_2$ errors and orders for \textit{the nonsymmetric DDG method}}      \\ \cline{2-6} 
              & $h$      & $h/2$    & Order & $h/4$    & Order \\ \hline
        $k=2$ & 2.93E-03 & 4.41E-04 & 2.73  & 5.19E-05 & 3.09  \\
        $k=3$ & 2.09E-04 & 1.31E-05 & 3.99  & 8.54E-07 & 3.94  \\
        $k=4$ & 1.72E-05 & 5.23E-07 & 5.04  & 1.50E-08 & 5.13  \\ \hlineB{3}
    \end{tabular}
\end{table}

\begin{table}[h!]
    \centering
    \caption{$L_\infty$ errors for \protect\hyperlink{example:porous-medium-equation-manuf}{\ExamplePorousManuf} at $T=1$}\label{table:porous-medium-Linf}
    \begin{tabular}{cccccc}
        \hlineB{3}
                 & \multicolumn{5}{c}{$L_\infty$ errors and orders for \textit{the DDGIC method}}          \\ \cline{2-6} 
                 & $h$      & $h/2$    & Order & $h/4$    & Order \\ \hline
        $k=2$    & 2.42E-02 & 3.87E-03 & 2.65  & 5.33E-04 & 2.86  \\
        $k=3$    & 2.29E-03 & 7.78E-05 & 4.88  & 3.45E-06 & 4.49  \\
        $k=4$    & 2.60E-04 & 1.06E-05 & 4.62  & 3.50E-07 & 4.92  \\ \hlineB{2}
                 &          &          &       &          &       \\ \hlineB{2}
                 & \multicolumn{5}{c}{$L_\infty$ errors and orders for \textit{the symmetric DDG method}}      \\ \cline{2-6} 
                 & $h$      & $h/2$    & Order & $h/4$    & Order \\ \hline
        $k=2$    & 2.29E-02 & 3.54E-03 & 2.69  & 4.97E-04 & 2.83  \\
        $k=3$    & 2.05E-03 & 5.61E-05 & 5.19  & 2.78E-06 & 4.33  \\
        $k=4$    & 2.45E-04 & 1.02E-05 & 4.59  & 3.41E-07 & 4.90  \\ \hlineB{2}
                 &          &          &       &          &       \\ \hlineB{2}
                 & \multicolumn{5}{c}{$L_\infty$ errors and orders for \textit{the nonsymmetric DDG method}}      \\ \cline{2-6} 
                 & $h$      & $h/2$    & Order & $h/4$    & Order \\ \hline
        $k=2$ & 2.69E-02    & 4.46E-03 & 2.59  & 6.06E-04 & 2.88  \\
        $k=3$ & 2.45E-03    & 6.97E-05 & 5.14  & 4.32E-06 & 4.01  \\
        $k=4$ & 2.79E-04    & 1.15E-05 & 4.61  & 3.70E-07 & 4.95  \\ \hlineB{3}
    \end{tabular}
\end{table}
\vspace{0.5cm}
\hypertarget{example:porous-medium-equation-smooth}{}
\noindent\textbf{\ExamplePorousSmooth.} In this example, we consider the same equation as in \hyperlink{example:porous-medium-equation-manuf}{\ExamplePorousManuf}, but with the model coefficient $\gamma=2$ on $\Omega=[-10,10]\times[-10,10]$. Here, we investigate the performance of the new DDG methods for two initially disconnected, merging bumps which are defined by the following: 
\begin{equation*}
    U_0(x,y)=
    \begin{cases}
    e^{\frac{-1}{6-(x-2)^2-(y+2)^2}}, & (x-2)^2-(y+2)^2<6 \\
    e^{\frac{-1}{6-(x+2)^2-(y-2)^2}}, & (x+2)^2-(y-2)^2<6 \\
    0, & \text{otherwise}.
    \end{cases}
\end{equation*}
Note that $U(x,y,t)=0$ on $\partial\Omega$ for $t\ge 0$. In this case, we solve the problem on $h/16$ mesh along with $\lambda=0.1$, $\mu=1$ and $T=4$. In \Cref{fig:porous1}, we present a third order ($k=2$) symmetric DDG solution. The solutions corresponding to other DDG versions are similar, hence they are not included. In this case, we solve the problem on $h/16$ mesh along with $\lambda=0.1$, $\mu=1$ and $T=4$. 
We observe that bumps are diffused quickly and merged with finite time. The results are in good agreement with those in literature \cite{vidden2013sddg,yan2013,liu2011high}
\begin{figure}[h!]
    \begin{center}
        \begin{subfigure}{0.48\textwidth}
            \centering
            \includegraphics[scale=0.3,trim={0.5cm 0.5cm 0.5cm 2.25cm},clip]{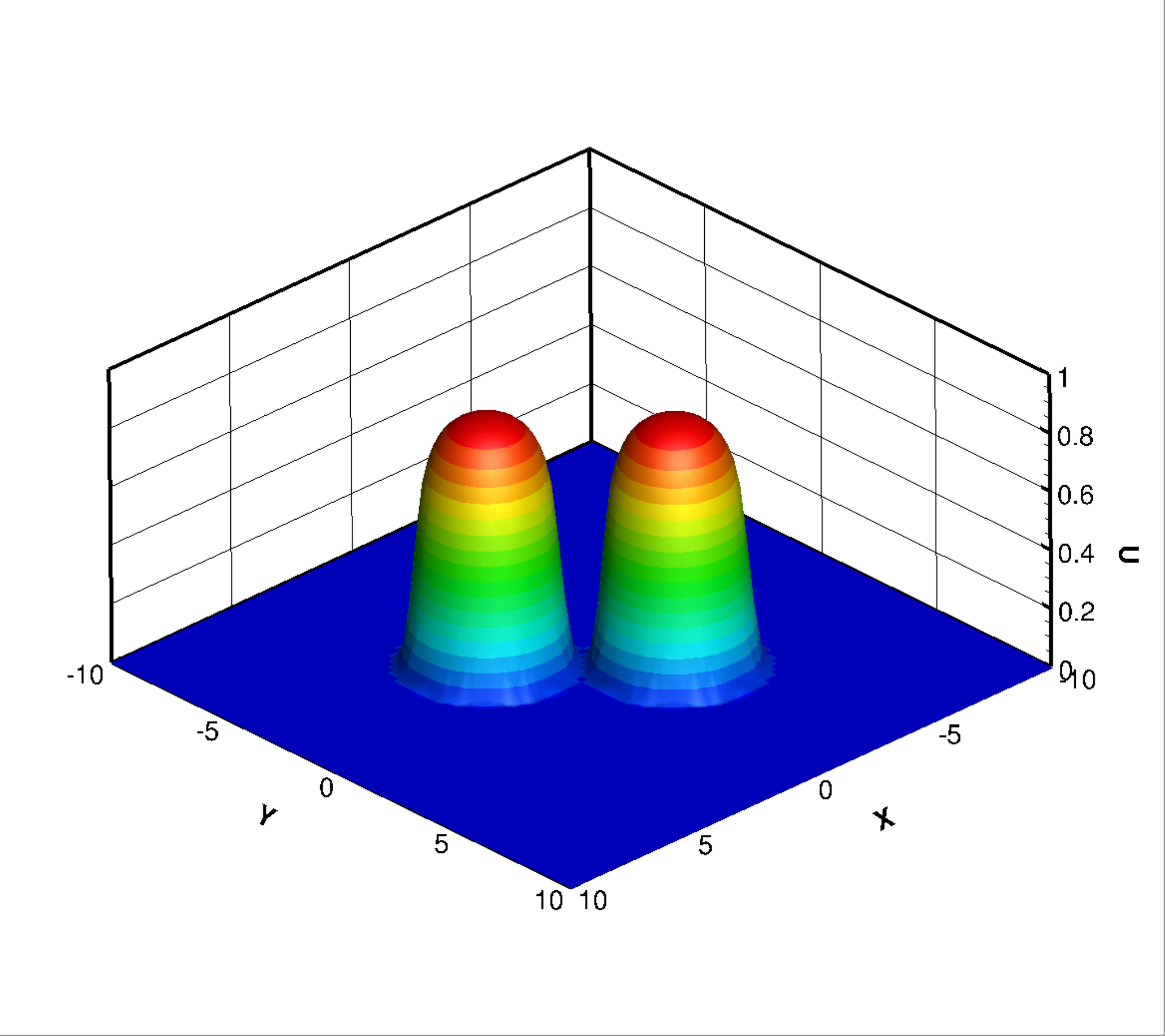}
            \caption{$t=0$}
        \end{subfigure}
        \begin{subfigure}{0.48\textwidth}
            \centering
            \includegraphics[scale=0.3,trim={0.5cm 0.5cm 0.5cm 2.25cm},clip]{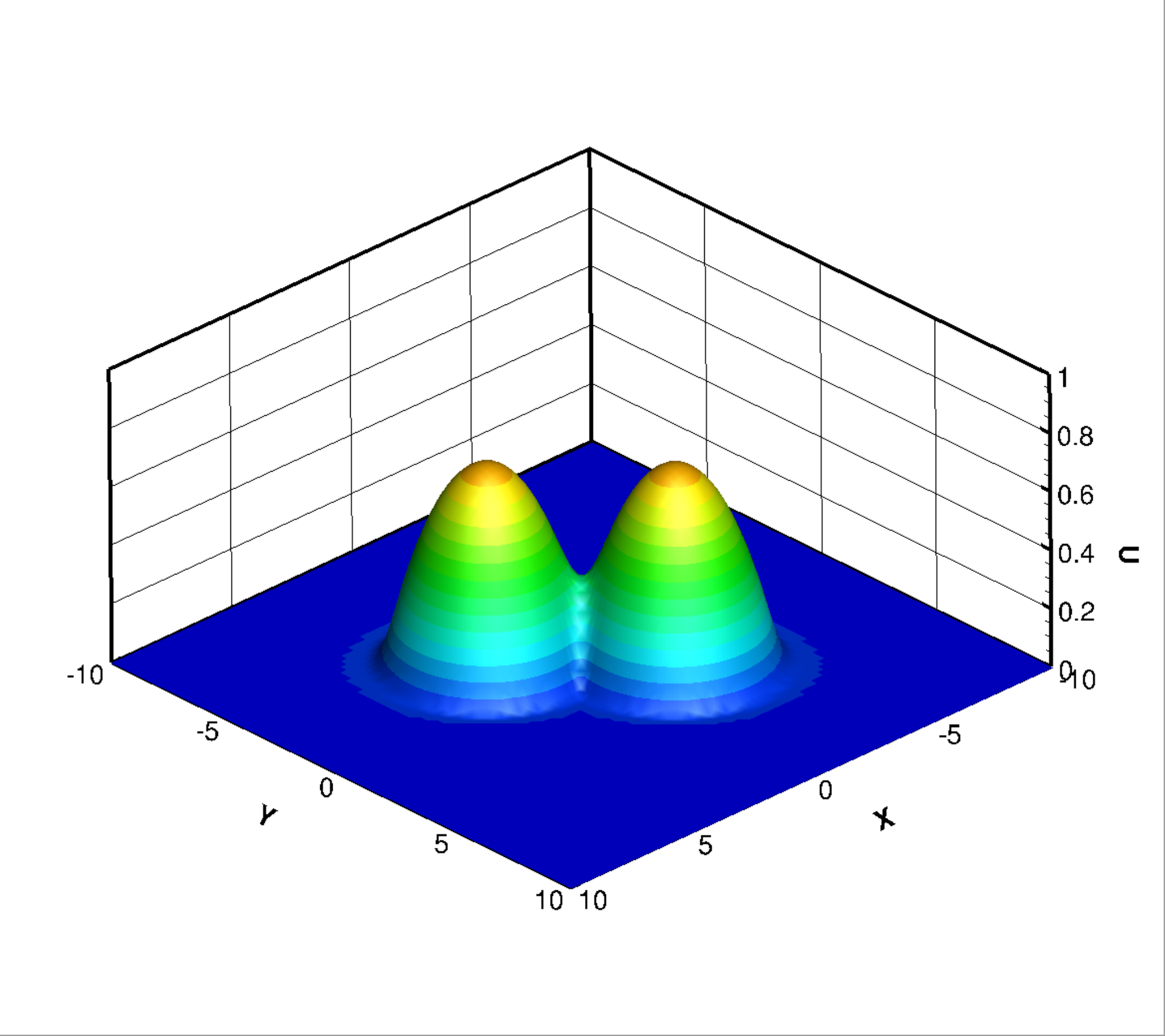}
            \caption{$t=0.5$}
        \end{subfigure}
        \begin{subfigure}{0.48\textwidth}
            \centering
            \includegraphics[scale=0.3,trim={0.5cm 2.65cm 0.5cm 1.5cm},clip]{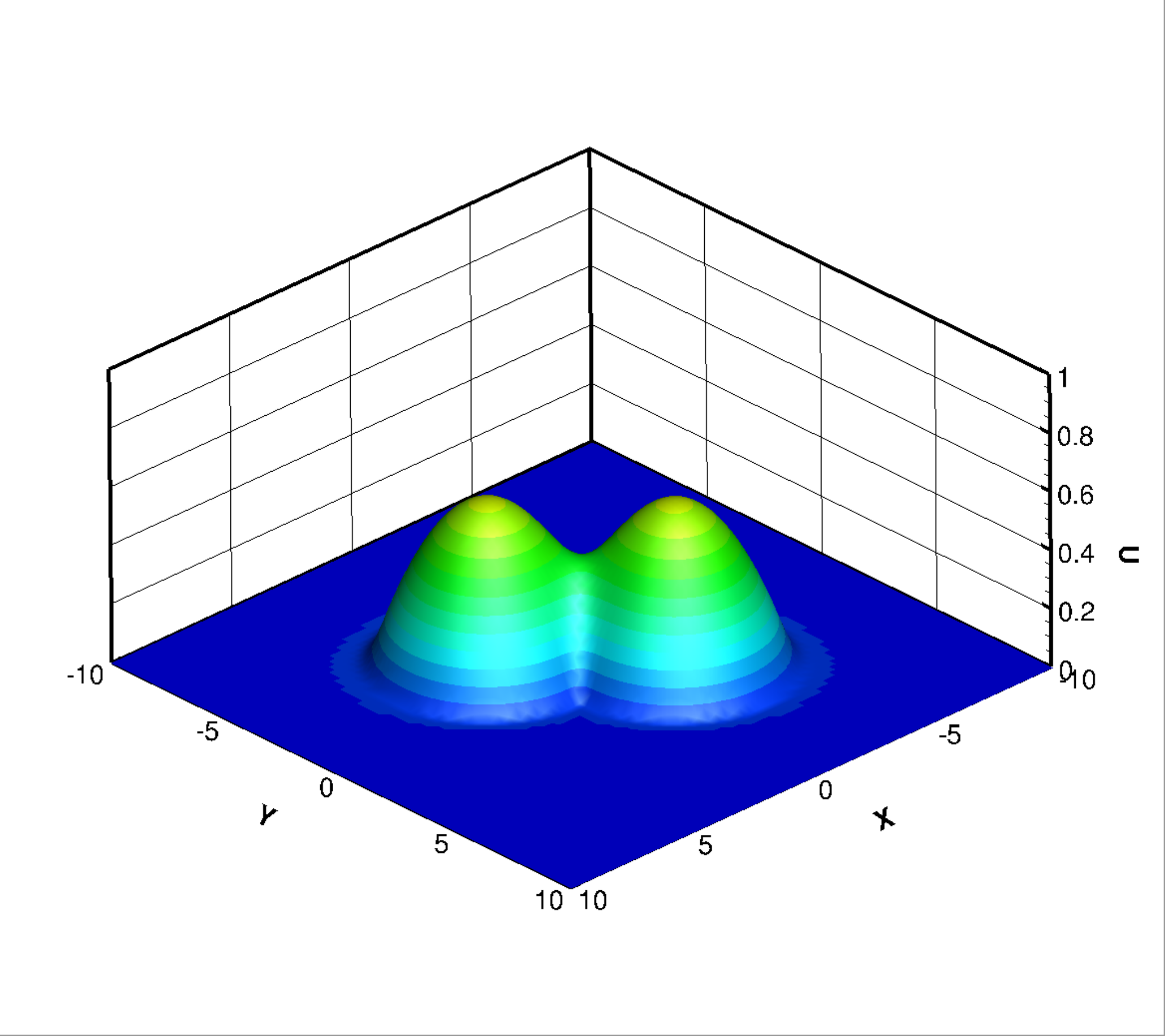}
            \caption{$t=1$}
        \end{subfigure}
        \begin{subfigure}{0.48\textwidth}
            \centering
            \includegraphics[scale=0.3,trim={0.5cm 2.65cm 0.5cm 1.5cm},clip]{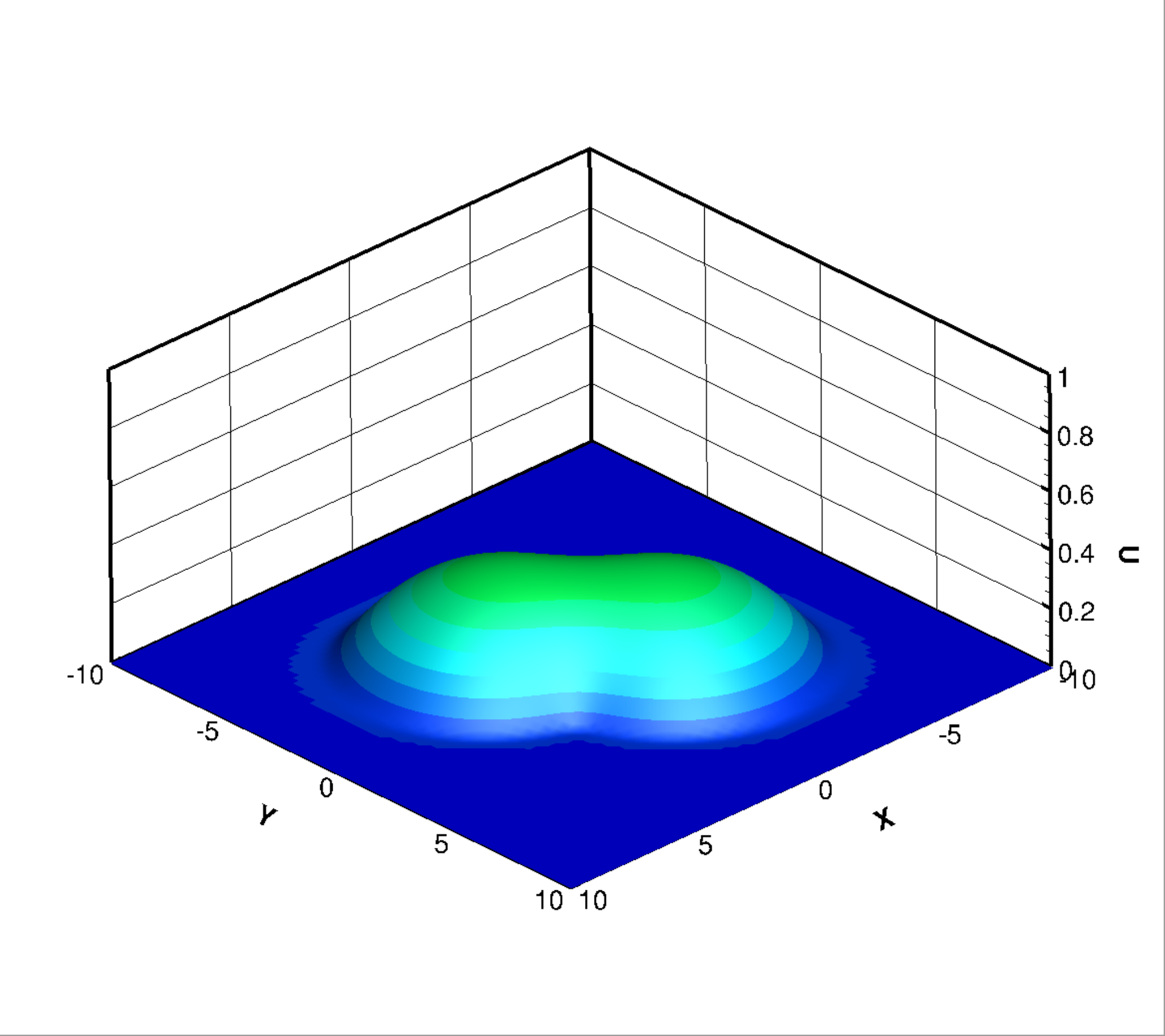}
            \caption{$t=4$}
        \end{subfigure}
    \end{center}
    \caption{The contour plot corresponding to the third ($k=2$) order symmetric DDG solution for \protect\hyperlink{example:porous-medium-equation-smooth}{\ExamplePorousSmooth}. 17 equally spaced contours  are used between $0$ and $0.8$.}
    \label{fig:porous1}
\end{figure}

\vspace{0.5cm}
\hypertarget{example:porous-medium-equation-discontinous}{}
\noindent\textbf{\ExamplePorousDiscontin} In this example, we continue to study the same equation as in \hyperlink{example:porous-medium-equation-manuf}{\ExamplePorousManuf}, but with the model coefficient $\gamma=2$ on $\Omega=[-1,1]\times[-1,1]$. $U(x,y,t)=0$ on $\partial\Omega$ for $t\ge 0$ and the initial condition is defined as
\begin{equation*}
    U_0(x,y)=
    \begin{cases}
    1, & (x,y)\in [-\frac{1}{2},\frac{1}{2}]\times[-\frac{1}{2},\frac{1}{2}] \\
    0, & \text{otherwise}.
    \end{cases}
\end{equation*}
We set $\lambda=0.1$, $\mu=1$ and $T=0.005$ and solve the problem on $h/16$ mesh. 

Since the initial condition is discontinuous, the numerical solutions obtained by the new DDG methods blow up unless a maximum-principle-satisfying (MPS) limiter is employed. Therefore, the linear scaling limiter in \cite{zhang2010_mps} is implemented to keep the numerical solution in $0\le u(x,y,t) \le 1$ for $t\ge0$. Note that the CFL condition \Cref{eqn:cfl-condition} is still in use, and the DDG parameters $\beta_0$ and $\beta_1$ are kept the same. 

In \Cref{fig:porous-medium-discontinuous}, the numerical solution obtained by the DDGIC method for a third order numerical solution ($k=2$) is shown. We observe that the numerical solution diffuses out smoothly in a stable manner and is in good agreement with Example 5.7 of \cite{DuMPS2019}.  
\begin{figure}[h!]
    \begin{center}
    	\begin{subfigure}[b]{0.48\textwidth}
    		\centering
    		\includegraphics[scale=0.3,trim={1cm 0.5cm 1cm 1.75cm},clip]{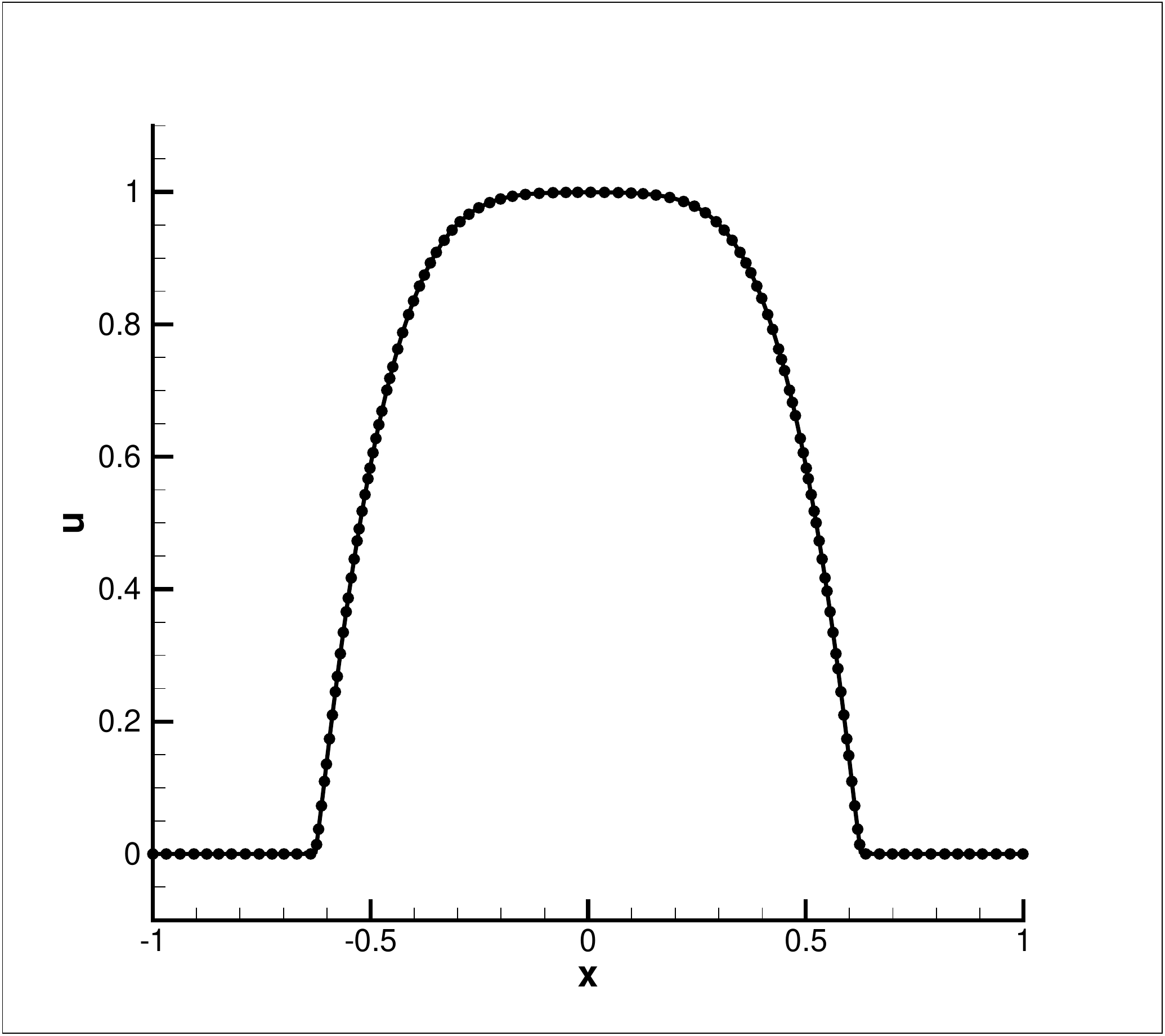}
    		\caption{}
    	\end{subfigure}
    	\begin{subfigure}[b]{0.48\textwidth}
    		\centering
    		\includegraphics[scale=0.3,trim={1cm 0.5cm 1cm 1.75cm},clip]{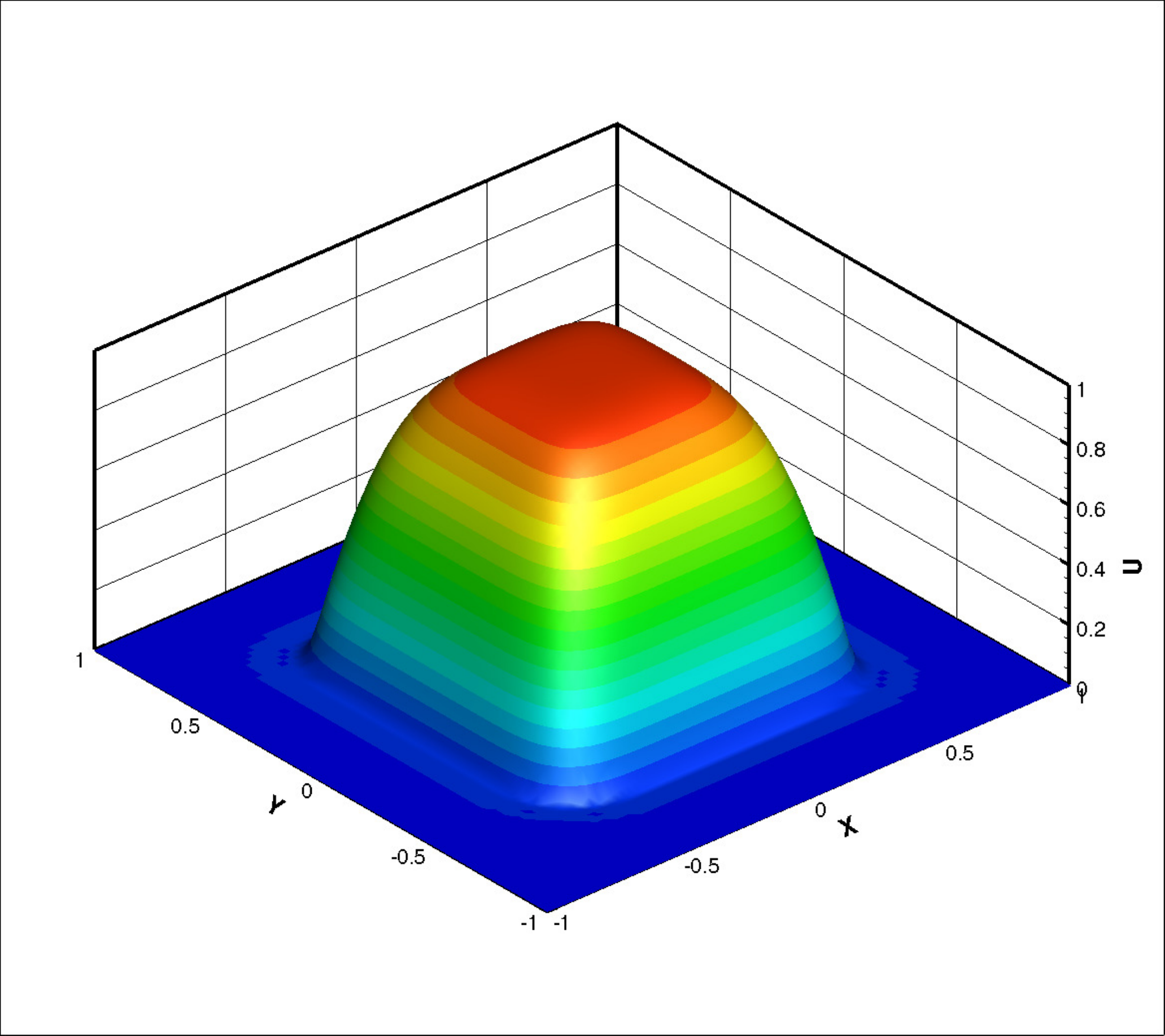}
    		\caption{}
    	\end{subfigure}
    \end{center}
	\caption{The third ($k=2$) order numerical solution obtained by DDGIC with MPS limiter for \protect\hyperlink{example:porous-medium-equation-smooth}{\ExamplePorousSmooth} at $T=0.005$. (a) The solution along $y=0$, (b) surface plot. 19 equally spaced contour levels  are used between 0 and 1. }\label{fig:porous-medium-discontinuous}
\end{figure}

\vspace{0.5cm}
\hypertarget{example:blwp}
\noindent\textbf{\ExampleBlwp} In this example, we consider 
\begin{equation}\label{eqn:blowup}
    \frac{\partial U}{\partial t}=\mu\left(2U_{xx}+3(U^{1.5})_{yy}+U^2\right),
\end{equation}
with the initial condition $U_0(x,y)=200\sin{(\pi x)}\sin{(\pi y)}$ on $\Omega=[0,1]\times[0,1]$. We apply the homogeneous Dirichlet boundary condition. Due to the last term on the right-hand side of \Cref{eqn:blowup}, the solution eventually blows up. As reported in Example 3.7 of  \cite{GUO2015181}, we observe that the numerical solution blows up after one time-step when a positivity-preserving limiter is not used. Therefore, the linear scaling limiter in \cite{zhang2010_mps} is employed to maintain the stability of the numerical approximation. However, the limiter's upper bound condition on the numerical solution is removed and the lower bound is set to 0 to preserve the positivity of the numerical solution. Furthermore, according to \cite{GUO2015181}, the CFL condition in \Cref{eqn:cfl-condition} is modified as
\begin{equation*}
    \Delta t\frac{\mu}{\min_K{h^2_K}}<\min{\left(\omega\lambda,\frac{1}{\max_K{u}}\right)}.
\end{equation*}
As in the previous example, the DDG parameters $\beta_0,\beta_1$ are kept the same. The $h/16$ mesh is used to solve this problem, and we set $\lambda=0.01$, $\mu=1$, and the quadrature rules are exact up to polynomials of degree $(2k+1)th$. As for the stopping criterion, we follow \cite{GUO2015181} and take $\Delta t<10^{-13}$. However, it is worth emphasizing that the focus of this paper is not to design a positivity-preserving limiter. Instead, we simply explore the performance of the new DDG methods in computationally challenging cases. Therefore, the modified CFL condition along with the linear scaling limiter does not guarantee the positivity of the numerical solution for the time level $t^{n+1}$. In fact, almost all of the MPS or positivity-preserving limiters are designed for forward Euler method, but instead, they are used in a Runge-Kutta scheme. Even though they might propose a CFL condition that might guarantee the positivity (or maximum-principle) of the numerical solution for a forward Euler method, the physical limits of the problem might be violated by the numerical solution at any stage of the Runge-Kutta scheme. Therefore, we follow \cite{Zhang-2017-NS} and restart the Runge-Kutta time step with $\Delta t/2$ when it is no longer possible to maintain the positivity of the numerical solution in one or more cells.

For a third ($k=2$) order solution, we observe that the restart algorithm is only activated at the last time-step at $t=1.8155\times10^{-2}$ for all DDG versions. However, it turns out that the time step becomes smaller than the machine precision $\epsilon$, i.e. $\Delta t<\epsilon$, during the restart procedure. This means that it is no longer possible to continue time-stepping without violating the positivity in one or more cells. Therefore, we denote $t=1.81552\times10^{-2}$ as the blow-up time. Note that this value is slightly smaller than the blow-up time $t=1.82378\times10^{-2}$ reported in \cite{GUO2015181}. Thus, we obtain slightly lower values for $\max_K{u_K}$. In \Cref{fig:blowup}, we show the numerical solution for a third order ($k=2$) numerical solution obtained by the DDGIC method when the solution blows up at $t=1.81552\times10^{-2}$. Since the results obtained by other DDG versions are similar, they are not shown. We observe that the numerical solution does not have oscillations and qualitatively similar to that in \cite{GUO2015181}. 
\begin{figure}[h!]
	\begin{center}
    	\begin{subfigure}[b]{0.48\textwidth}
    		\centering
    		\includegraphics[scale=0.35,trim={2cm 0.10cm 2cm 0.1cm},clip]{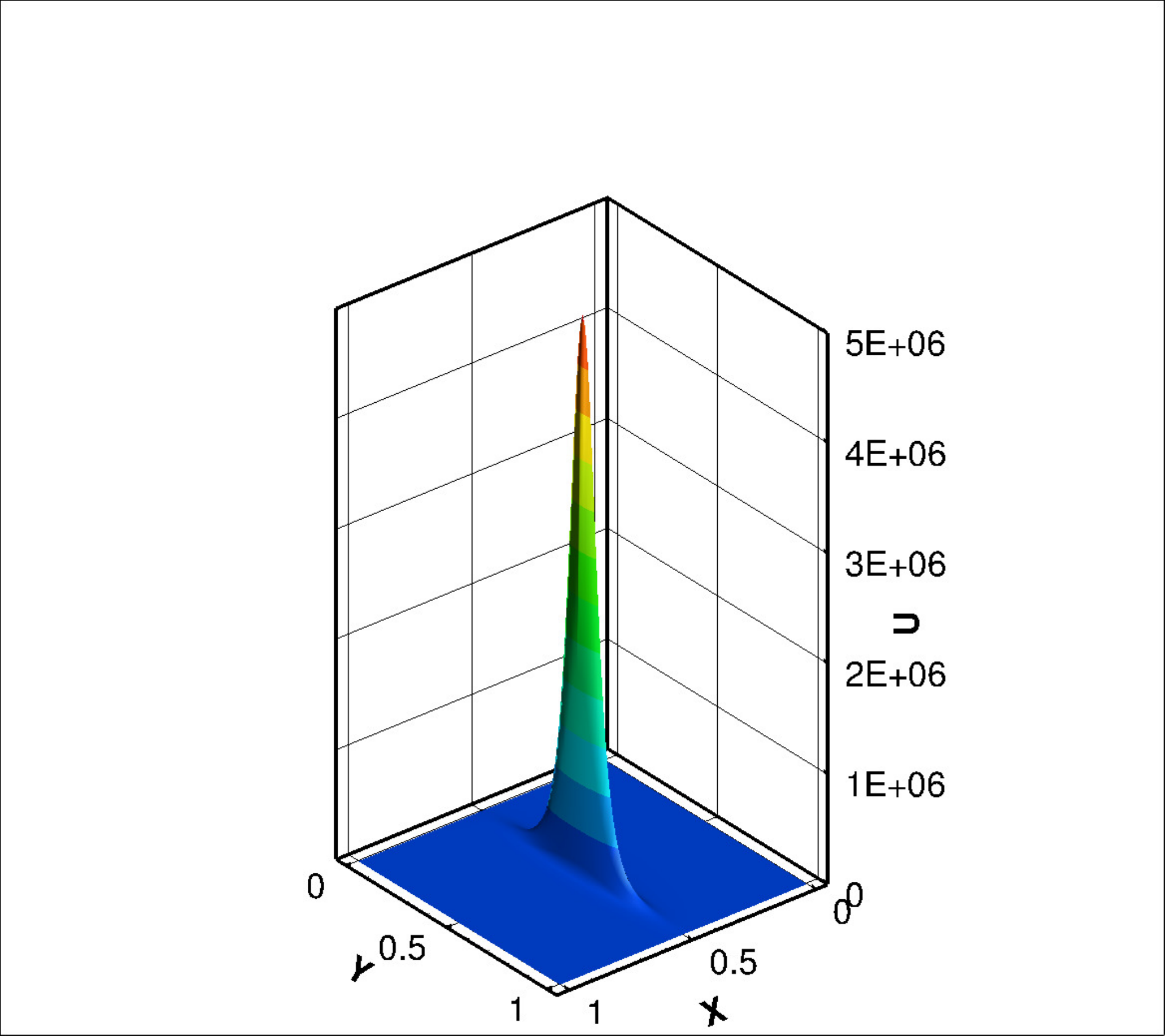}
    		\caption{u}
    	\end{subfigure}
    	\begin{subfigure}[b]{0.48\textwidth}
    		\centering
    		\includegraphics[scale=0.35,trim={2cm 0.10cm 2cm 0.1cm},clip]{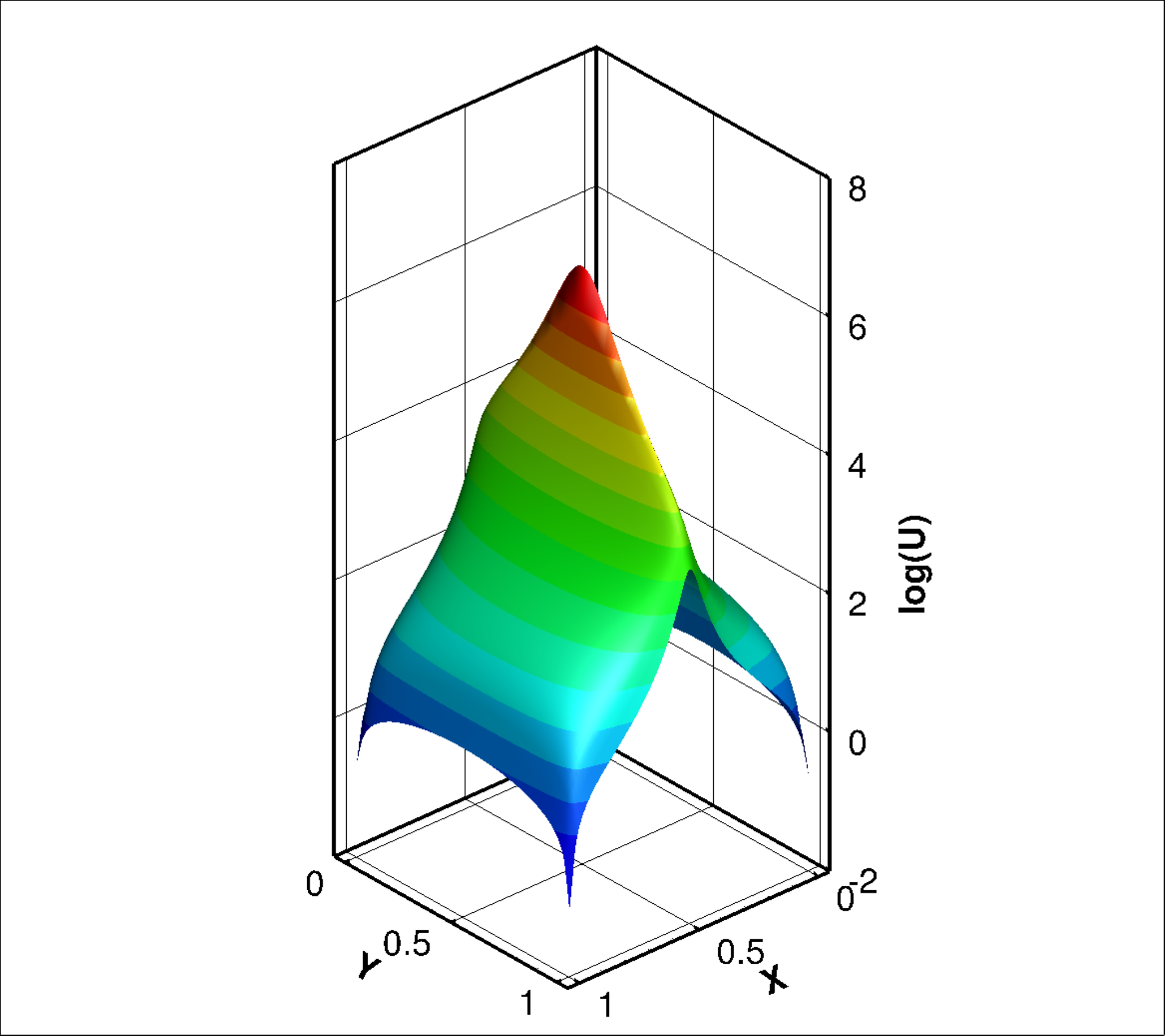}
    		\caption{log(u)}
    	\end{subfigure}
    \end{center}
    \caption{The third ($k=2$) order numerical solution obtained by DDGIC with MPS limiter for \protect\hyperlink{example:blowup}{\ExampleBlwp} at $T=1.81552\times10^{-2}$}\label{fig:blowup}
\end{figure}

\section{Concluding Remarks}\label{sec:conclusion}

In this paper, we have unified the new framework for direct discontinuous Galerkin (DDG) methods by extending the new DDGIC method \cite{danis2021new} to the symmetric and the nonsymmetric DDG versions. Unlike their original counterparts, the new DDG methods do not require evaluating an antiderivative of the nonlinear diffusion matrix. By constructing a new direction vector at each element interface, a linear numerical flux is used regardless of the problem type. Furthermore, the nonlinear linear stability theory of the new methods has been developed and several numerical experiments have been conducted to perform the error analysis. In all numerical examples, the DDGIC and symmetric DDG methods demonstrated optimal $(k+1)th$ order convergence and their performances were assessed to be equivalent. On the other hand, the performance of the nonsymmetric DDG method varied in all numerical examples. The nonsymmetric DDG method achieved optimal $(k+1)th$ order convergence for odd degree polynomials in all examples while an order loss was observed with the even degree polynomials except for the cases with diagonal diffusion matrices. In short, a high-order accuracy is achieved by all new DDG methods. However, the DDGIC and symmetric DDG methods were found to be superior to the nonsymmetric version. 
\appendix
\section{Important Inequalities}\label{sec:appendix} 

In this section, we discuss important inequalities used in the proofs of the stability analysis for symmetric DDG and DDGIC methods. 

\begin{lemma}[Young's inequality]\label{lemma:youngs_ineq}
Suppose that $a,b \geq 0$, $1<p,q,\infty$, and that $\frac{1}{p} + \frac{1}{q}=1$. Then, we have that
\begin{equation*}
    ab\leq \frac{a^p}{p} + \frac{b^q}{q}.
\end{equation*}
\end{lemma}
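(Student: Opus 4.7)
The plan is to reduce Young's inequality to the convexity of the exponential function (equivalently, concavity of the logarithm), which is the classical one-line route. First I would dispose of the degenerate cases: if either $a=0$ or $b=0$, then $ab=0$ while the right-hand side is nonnegative, so the inequality holds trivially. Thus I may assume $a,b>0$ for the remainder.

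Next, I would rewrite the product $ab$ as an exponential of a convex combination. Since $a = \exp(\log a)$ and $b = \exp(\log b)$, and since the weights $1/p$ and $1/q$ are nonnegative and sum to $1$ by hypothesis, I write
\begin{equation*}
    ab = \exp(\log a + \log b) = \exp\!\left( \frac{1}{p}\log(a^p) + \frac{1}{q}\log(b^q) \right).
\end{equation*}
The key step is then to invoke convexity of $\exp$ (Jensen's inequality applied to the two-point convex combination with weights $1/p$ and $1/q$) to obtain
\begin{equation*}
    \exp\!\left( \frac{1}{p}\log(a^p) + \frac{1}{q}\log(b^q) \right) \leq \frac{1}{p}\exp(\log a^p) + \frac{1}{q}\exp(\log b^q) = \frac{a^p}{p} + \frac{b^q}{q},
\end{equation*}
which is exactly the desired conclusion.

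There is no serious obstacle here, as the proof is a one-line application of convexity once the product is rewritten in exponential form. If one prefers a purely self-contained route without quoting Jensen's inequality, an alternative is to fix $b>0$ and minimize $f(a) = a^p/p + b^q/q - ab$ over $a\geq 0$; the critical point $a^{p-1}=b$ together with $(p-1)q = p$ (a consequence of the conjugate exponent relation) yields $f(a_{\min}) = 0$, giving the same bound. Either way, the argument is short and the conjugacy condition $1/p+1/q=1$ is used precisely once, to guarantee that the weights in the convex combination sum to unity.
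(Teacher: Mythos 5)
Your proof is correct. The paper itself states this lemma without proof, treating it as the classical Young inequality, so there is no in-paper argument to compare against; your convexity-of-$\exp$ derivation (with the degenerate case $a=0$ or $b=0$ handled separately, and the conjugacy condition $\tfrac{1}{p}+\tfrac{1}{q}=1$ used exactly where it is needed to form the convex combination) is the standard and complete justification, and your alternative via minimizing $f(a)=a^p/p+b^q/q-ab$ is equally valid.
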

A corollary to \Cref{lemma:youngs_ineq} can be obtained by considering $ab=(a\epsilon^{1/p})(b/\epsilon^{1/p})$ for $\epsilon>0$.
\begin{corollary}\label{cor:youngs_ineq}
Suppose that $a,b \geq 0$, $1<p,q,\infty$, and that $\frac{1}{p} + \frac{1}{q}=1$. Furthermore, if $\epsilon>0$, then 
\begin{equation*}
    ab\leq \frac{\epsilon a^p}{p} + \frac{b^q}{q\epsilon^{q/p}}.
\end{equation*}
\end{corollary}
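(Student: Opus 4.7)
The plan is to derive the corollary as an immediate consequence of Lemma A.1 (Young's inequality) by a simple rescaling, exactly as suggested by the hint preceding the statement. Given $\epsilon > 0$ and $a, b \ge 0$, I would introduce the rescaled quantities $\tilde{a} = a\epsilon^{1/p}$ and $\tilde{b} = b\epsilon^{-1/p}$. Both remain non-negative since $a, b \ge 0$ and $\epsilon > 0$, so they are admissible inputs for Young's inequality. The key observation is that the product is invariant under this rescaling: $\tilde{a}\tilde{b} = ab$, so nothing happens on the left-hand side of the target inequality.

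Applying Lemma A.1 to the pair $(\tilde{a},\tilde{b})$, I obtain
\begin{equation*}
ab \;=\; \tilde{a}\tilde{b} \;\leq\; \frac{\tilde{a}^p}{p} + \frac{\tilde{b}^q}{q}.
\end{equation*}
It then remains to simplify the right-hand side using elementary exponent arithmetic: $\tilde{a}^p = \epsilon\, a^p$ and $\tilde{b}^q = \epsilon^{-q/p}\, b^q$. Substituting these identities directly produces the claimed bound
\begin{equation*}
ab \;\leq\; \frac{\epsilon a^p}{p} + \frac{b^q}{q\epsilon^{q/p}}.
\end{equation*}
The conjugate exponent relation $1/p + 1/q = 1$ is already baked into Lemma A.1 and does not need to be manipulated further.

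There is essentially no obstacle to overcome here; the only point warranting a moment's care is the exponent bookkeeping, namely that $(\epsilon^{-1/p})^q = \epsilon^{-q/p}$ rather than some cosmetically simpler expression involving $q$ alone. Since the paper invokes this corollary repeatedly to split cross terms of the form $\llbracket u\rrbracket \cdot \nabla u$ in the stability proofs (Theorems 4.2 and 4.3), stating it in this weighted form with free parameter $\epsilon$ is exactly what is needed to absorb gradient terms into the elliptic $A(u)\nabla u\cdot\nabla u$ contribution.
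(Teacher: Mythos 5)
Your proof is correct and follows exactly the same rescaling argument the paper indicates, namely writing $ab=(a\epsilon^{1/p})(b/\epsilon^{1/p})$ and applying Lemma~\ref{lemma:youngs_ineq} to the rescaled pair. The exponent bookkeeping $(\epsilon^{-1/p})^q=\epsilon^{-q/p}$ is handled correctly, so there is nothing to add.
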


We will also recall the following lemma due to Ern and Guermord \cite{ern2021finite}:
\begin{lemma}\label{lemma:bound-on-sobolev-seminorm}
Let $v\in\VhK$ and $l\in\mathbb{N}$. There exists a constant $C>0$ for any non-negative integer $m \leq l$ such that
\begin{equation*}\label{eqn:bound-on-sobolev-seminorm}
    \left| v \right|_{W^{l,p}(K)} \leq Ch_K^{m-l+d\left(\frac{1}{p}-\frac{1}{r}\right)}\left| v \right|_{W^{m,r}(K)}, \quad\forall p,r\in[1,\infty].
\end{equation*}
\end{lemma}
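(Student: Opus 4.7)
The plan is to prove this via the standard scaling argument for inverse inequalities on finite-dimensional polynomial spaces. The three ingredients are an affine pullback to a reference element, the equivalence of all norms on the finite-dimensional space $\mathbb{P}_k(\hat{K})$, and careful bookkeeping of the Jacobian scalings. Shape regularity of $\mathcal{T}_h$ is what allows the constants in each of these steps to be absorbed into a single mesh-independent $C$.

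First, I would fix a reference element $\hat{K}$ (say the unit simplex in $\mathbb{R}^d$) and introduce the affine map $F_K:\hat{K}\to K$ with $F_K(\hat{x})=B_K\hat{x}+b_K$. Shape regularity gives the two-sided bounds $\|B_K\|\lesssim h_K$, $\|B_K^{-1}\|\lesssim h_K^{-1}$, and $|\det B_K|\simeq h_K^d$. For the pullback $\hat{v}:=v\circ F_K$, the chain rule applied $l$ times together with a change of variables yields the scaling
\begin{equation*}
    |v|_{W^{l,p}(K)} \;\simeq\; h_K^{-l+d/p}\,|\hat{v}|_{W^{l,p}(\hat{K})},
\end{equation*}
and analogously $|\hat{v}|_{W^{m,r}(\hat{K})}\simeq h_K^{m-d/r}|v|_{W^{m,r}(K)}$, with constants depending only on the shape-regularity parameter and on $l,m,p,r,d$.

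Second, on the fixed reference element, $\hat{v}$ lies in the finite-dimensional space $\mathbb{P}_k(\hat{K})$. All seminorms and norms on this space are equivalent, so there is a constant $\hat{C}=\hat{C}(k,l,m,p,r,d)$ with
\begin{equation*}
    |\hat{v}|_{W^{l,p}(\hat{K})}\;\leq\;\hat{C}\,|\hat{v}|_{W^{m,r}(\hat{K})}
\end{equation*}
whenever $m\leq l$. Combining this with the two scalings gives
\begin{equation*}
    |v|_{W^{l,p}(K)} \;\leq\; C\,h_K^{-l+d/p}\,|\hat{v}|_{W^{l,p}(\hat{K})} \;\leq\; C\,h_K^{-l+d/p}\,|\hat{v}|_{W^{m,r}(\hat{K})} \;\leq\; C\,h_K^{m-l+d\left(\frac{1}{p}-\frac{1}{r}\right)}\,|v|_{W^{m,r}(K)},
\end{equation*}
which is the claimed inequality.

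The main technical point, rather than an obstacle, is the bookkeeping in the pullback: when $m<l$, one has to express the $l$-th derivatives of $v$ in terms of $l$-th derivatives of $\hat{v}$ multiplied by $l$ copies of $B_K^{-1}$, and similarly for the forward direction, which is where the powers $h_K^{-l}$ and $h_K^{m}$ come from; the $L^p$-to-$L^r$ discrepancy produces the $d(1/p-1/r)$ term through the Jacobian factor $|\det B_K|^{1/p-1/r}$. A secondary (but standard) point is that shape regularity, not just quasi-uniformity, is the minimal assumption needed so that $\|B_K\|\,\|B_K^{-1}\|$ is uniformly bounded, making $\hat{C}$ independent of $K$. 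One could alternatively cite Proposition~12.1 (or the analogous result) of Ern–Guermond directly; this proof proposal merely reconstructs it so the origin of the exponent is explicit.
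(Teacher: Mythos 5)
Your argument is correct and is essentially the paper's own proof, which consists solely of citing Lemma 12.1 of Ern--Guermond; your affine-pullback scaling computation reproduces the exponent $m-l+d\left(\tfrac{1}{p}-\tfrac{1}{r}\right)$ exactly as in that reference. The only step worth tightening is the reference-element comparison: $\left|\cdot\right|_{W^{l,p}(\hat{K})}$ and $\left|\cdot\right|_{W^{m,r}(\hat{K})}$ are merely seminorms on $\mathbb{P}_k(\hat{K})$, so the bound follows not from ``equivalence of all norms'' but from the kernel inclusion $\mathbb{P}_{m-1}(\hat{K})\subseteq\mathbb{P}_{l-1}(\hat{K})$ for $m\le l$, which lets one pass to the quotient $\mathbb{P}_k(\hat{K})/\mathbb{P}_{m-1}(\hat{K})$ where $\left|\cdot\right|_{W^{m,r}(\hat{K})}$ is a genuine norm dominating the (well-defined) seminorm $\left|\cdot\right|_{W^{l,p}(\hat{K})}$.
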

\begin{proof}
See the proof of Lemma 12.1 in \cite{ern2021finite}.
\end{proof}

Next, we will derive a series of essential inequalities used in the proof of the stability results for symmetric DDG and DDGIC methods.
\begin{lemma} \label{lemma:xi_ineq}
Assume that $A(u)\in\mathbb{R}^{2\times 2}$ is positive definite and there exist $\gamma,\gamma^*\in\mathbb{R}$ such that the eigenvalues $(\gamma_1,\gamma_2)$ of $A(u)$ lie between $[\gamma,\gamma^*]$ for $\forall u\in\mathbb{R}$.
If $\bxi(u)$ is given as in \Cref{eqn:discrete-new-direction-vector}, then $\forall\mathbf{x}\in\mathbb{R}^2$ there holds
\begin{equation*}
    \left|\bxi(u)\cdot\mathbf{x}\right|\leq\gamma^*\left\|\mathbf{x}\right\|,
\end{equation*}
where we denote by $\left\|\cdot\right\|$ the Euclidean norm in $\mathbb{R}^2$.
\end{lemma}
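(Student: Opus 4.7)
The plan is to reduce the estimate to a bound on $\|A(u)^T\mathbf{n}\|$ via Cauchy--Schwarz and then exploit the spectral structure of $A(u)$. Starting from the definition $\bxi(u) = A(u)^T\mathbf{n}$, I would rewrite the left-hand side as
\begin{equation*}
    |\bxi(u)\cdot\mathbf{x}| \;=\; |(A(u)^T\mathbf{n})\cdot\mathbf{x}| \;\leq\; \|A(u)^T\mathbf{n}\|\,\|\mathbf{x}\|,
\end{equation*}
reducing the task to proving $\|A(u)^T\mathbf{n}\|\leq \gamma^*$ for every unit vector $\mathbf{n}\in\mathbb{R}^2$.

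Next, since $A(u)$ is (symmetric) positive definite, as tacitly used throughout the energy estimates via the quadratic form $\nabla u\cdot A(u)\nabla u$, I would invoke the spectral decomposition $A(u) = Q\Lambda Q^T$ with $Q$ orthogonal and $\Lambda = \operatorname{diag}(\gamma_1,\gamma_2)$. Then $A(u)^T\mathbf{n} = A(u)\mathbf{n}$, and setting $\mathbf{m} = Q^T\mathbf{n}$ (still a unit vector because $Q$ is orthogonal), a direct computation yields
\begin{equation*}
    \|A(u)^T\mathbf{n}\|^2 \;=\; \mathbf{n}^T A(u)^2 \mathbf{n} \;=\; \mathbf{m}^T \Lambda^2 \mathbf{m} \;=\; \gamma_1^2 m_1^2 + \gamma_2^2 m_2^2.
\end{equation*}
Applying the hypothesis $\gamma_i \leq \gamma^*$ together with $m_1^2+m_2^2=1$ gives $\|A(u)^T\mathbf{n}\|^2 \leq (\gamma^*)^2$, which combined with the Cauchy--Schwarz estimate above completes the proof.

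The only subtle point is the symmetry assumption on $A(u)$. If one allowed a genuinely non-symmetric positive definite $A(u)$ (as in \ExampleAnisoDiff), the correct operator-norm bound would involve the largest singular value of $A(u)$ rather than its largest eigenvalue, and the two can differ. However, as the remark following \ExampleAnisoDiff\ explains, the non-symmetric case is equivalent to a symmetric positive definite reformulation, so the hypothesis on eigenvalues is consistent with the bound claimed. Apart from this modeling convention, the argument is purely a Cauchy--Schwarz plus spectral-norm computation and should present no technical obstacle.
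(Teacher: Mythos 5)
Your proof is correct and follows essentially the same route as the paper's: Cauchy--Schwarz to reduce to bounding $\left\|A(u)^T\mathbf{n}\right\|$, then expanding $\mathbf{n}$ in the orthonormal eigenbasis of $A(u)$ (your $Q\Lambda Q^T$ decomposition is the matrix form of the paper's expansion $\mathbf{n}=n_1\mathbf{e}_1+n_2\mathbf{e}_2$) to get $\gamma_1^2 n_1^2+\gamma_2^2 n_2^2\leq(\gamma^*)^2$. Your closing caveat about genuinely non-symmetric positive definite $A(u)$ is well taken --- the paper's own proof silently assumes orthonormal eigenvectors, i.e.\ symmetry, at exactly the same step.
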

\begin{proof}
Using the Scharwz inequality, we have
\begin{equation*}
    \left|\bxi(u)\cdot\mathbf{x}\right|\leq \left\|\bxi(u) \right\|\left\|\mathbf{x} \right\|.
\end{equation*}
Let $\mathbf{e}_1,\mathbf{e}_2$ be the orthonormal eigenvectors corresponding to the eigenvalues $\gamma \text{ and } \gamma^*$, respectively. Since $A(u)$ is positive-definite, its eigenvectors form a basis of $\mathbb{R}^2$. Then, the unit normal vector can be written as $\mathbf{n}=n_1 \mathbf{e}_1 + n_2 \mathbf{e}_2$, and then, it follows that
\begin{equation*}
    \begin{aligned}
        \left\|\bxi(u) \right\|^2 &= \left\|A(\llkh u \rrkh)^T\mathbf{n}\right\|^2 = \left\|\gamma_1 n_1 \mathbf{e}_1 + \gamma_2 n_2 \mathbf{e}_2\right\|^2 = \gamma_1^2 n_1^2+\gamma_2^2 n_2^2 \\
        & \leq (\gamma^*)^2\left(n_1^2+n_2^2\right) = (\gamma^*)^2
    \end{aligned}
\end{equation*}
and the conclusion holds.
\end{proof}

\begin{lemma} \label{lemma:surf_jump}
Suppose that $A(u)\in\mathbb{R}^{2\times 2}$ is positive definite and there exist $\gamma,\gamma^*\in\mathbb{R}$ such that the eigenvalues $(\gamma_1,\gamma_2)$ of $A(u)$ lie between $[\gamma,\gamma^*]$ for $\forall u\in\mathbb{R}$. If $\beta_0 \geq 0$, then we have that
\begin{equation*}
    \sum_{e\in\mathcal{E}_h}\int_{e} \frac{\beta_0}{h_e} \llbracket u \rrbracket^2 \mathbf{n}\cdot\bxi(\llkh u \rrkh)~ds \geq \sum_{e\in\mathcal{E}_h}\int_{e}\frac{\gamma\beta_0}{h_e} \llbracket u \rrbracket^2 ~ ds.
\end{equation*}
\end{lemma}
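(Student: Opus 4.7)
The plan is to reduce the stated inequality to a pointwise bound on the integrand $\mathbf{n}\cdot\bxi(\llkh u \rrkh)$, and then multiply by the nonnegative factor $\frac{\beta_0}{h_e}\llbracket u \rrbracket^2$ and integrate. Specifically, I will show that $\mathbf{n}\cdot\bxi(\llkh u \rrkh)\ge\gamma$ on each edge, which is the heart of the lemma; everything else is monotone integration.

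To establish the pointwise bound, I would unfold the definition $\bxi(\llkh u \rrkh)=A(\llkh u \rrkh)^T\mathbf{n}$ from \Cref{eqn:discrete-new-direction-vector} and observe
\[
\mathbf{n}\cdot\bxi(\llkh u \rrkh)=\mathbf{n}^T A(\llkh u \rrkh)^T\mathbf{n}=\mathbf{n}^T A(\llkh u \rrkh)\mathbf{n},
\]
since the scalar is equal to its own transpose. Then I would follow the same argument already used in the proof of \Cref{lemma:xi_ineq}: let $\mathbf{e}_1,\mathbf{e}_2$ be an orthonormal basis of eigenvectors of $A(\llkh u \rrkh)$ with eigenvalues $\gamma_1,\gamma_2\in[\gamma,\gamma^*]$, expand $\mathbf{n}=n_1\mathbf{e}_1+n_2\mathbf{e}_2$, and compute
\[
\mathbf{n}^T A(\llkh u \rrkh)\mathbf{n}=\gamma_1 n_1^2+\gamma_2 n_2^2\ge\gamma(n_1^2+n_2^2)=\gamma\|\mathbf{n}\|^2=\gamma.
\]
This gives the pointwise lower bound almost immediately.

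Once this is in hand, the rest is routine: multiplying the inequality $\mathbf{n}\cdot\bxi(\llkh u \rrkh)\ge\gamma$ by the nonnegative quantity $\frac{\beta_0}{h_e}\llbracket u\rrbracket^2$ (nonnegativity uses the hypothesis $\beta_0\ge 0$), integrating over each edge $e$, and summing over $e\in\mathcal{E}_h$ yields the conclusion of the lemma.

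The only genuine subtlety worth flagging is the assumption that the eigenvectors of $A(\llkh u \rrkh)$ form an \emph{orthonormal} basis; this is exactly the place where one implicitly uses that $A(u)$ is (symmetric) positive definite, so that its eigen-decomposition is orthogonal and the eigenvalues are real and bounded below by $\gamma$. Under the hypotheses stated, this step is justified, and no additional estimates (trace inequalities, polynomial inverse estimates, etc.) are needed, which is why this lemma is cleaner than \Cref{lemma:surf_avg} and \Cref{lemma:surf_jump_second_deriv} referenced later in the paper.
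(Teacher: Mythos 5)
Your proof is correct and follows essentially the same route as the paper's: both reduce the claim to the pointwise Rayleigh-quotient bound $\mathbf{n}\cdot A(\llkh u\rrkh)^T\mathbf{n}\ge\gamma_1\ge\gamma$ and then multiply by the nonnegative factor $\frac{\beta_0}{h_e}\llbracket u\rrbracket^2$ and integrate. The paper simply asserts this eigenvalue bound in one line, whereas you justify it via the orthonormal eigenbasis expansion (and correctly flag that this implicitly uses symmetry of $A$), so your write-up is just a more detailed version of the same argument.
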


\begin{proof}
Recall the definition of the new direction vector $\bxi\llkh u \rrkh=A(\llkh u \rrkh)^T\mathbf{n}$. Then,
\begin{equation*}
    \frac{\beta_0}{h_e} \llbracket u \rrbracket^2 \mathbf{n}\cdot\bxi(\llkh u \rrkh)=\frac{\beta_0}{h_e} \llbracket u \rrbracket^2 \mathbf{n}\cdot A(\llkh u \rrkh)^T\mathbf{n}.
\end{equation*}
Since $\mathbf{n}\cdot A(\llkh u \rrkh)^T\mathbf{n} \geq \gamma_1 \geq \gamma $, the conclusion follows.
\end{proof}

\begin{lemma} \label{lemma:surf_avg}
Suppose that $A(u)\in\mathbb{R}^{2\times 2}$ is positive definite and there exist $\gamma,\gamma^*\in\mathbb{R}$ such that the eigenvalues $(\gamma_1,\gamma_2)$ of $A(u)$ lie between $[\gamma,\gamma^*]$ for $\forall u\in\VhK$. If $\beta_0 \geq 0$, then there exists a constant $C>0$ such that
\begin{equation*}
    \sum_{e\in\mathcal{E}_h}\int_{e} \llbracket u \rrbracket\llkh \nabla u \rrkh\cdot\bxi(\llkh u \rrkh) ~ds\geq -\sum_{e\in\mathcal{E}_h}\frac{\gamma\beta_0}{2h}\left\| \llbracket u \rrbracket \right\|^2_{L^2(e)} -\sum_{K\in\mathcal{T}_h} C\frac{(\gamma^*k)^2}{4\gamma\beta_0}\left\| \nabla u \right\|^2_{L^2(K)}.
\end{equation*}
Furthermore, under the same assumptions, there also holds
\begin{equation*}
    \sum_{e\in\mathcal{E}_h}\int_{e} \llbracket u \rrbracket\llkh \nabla u \rrkh\cdot\bxi(\llkh u \rrkh) ~ds\geq -\sum_{e\in\mathcal{E}_h}\frac{\gamma\beta_0}{4h}\left\| \llbracket u \rrbracket \right\|^2_{L^2(e)} -\sum_{K\in\mathcal{T}_h} C\frac{(\gamma^*k)^2}{2\gamma\beta_0}\left\| \nabla u \right\|^2_{L^2(K)}.
\end{equation*}
\end{lemma}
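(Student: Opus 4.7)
}

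The plan is to pointwise-bound the integrand by discarding the direction vector through \Cref{lemma:xi_ineq}, then separate $\llbracket u \rrbracket$ from $\llkh \nabla u \rrkh$ via Cauchy--Schwarz, then apply Young's inequality with a carefully tuned weight $\epsilon$, and finally absorb the average of the gradient on the edge $e$ into a volume norm on the adjacent elements by means of a polynomial trace inverse inequality. The scaling of $\epsilon$ in $h_e$ is what will make the first penalty term on the right match $\frac{\gamma\beta_0}{2h_e}$ (respectively $\frac{\gamma\beta_0}{4h_e}$) exactly, and everything else will then be dumped into the volume gradient term.

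More concretely, first I would estimate
\begin{equation*}
\left|\llkh\nabla u\rrkh\cdot\bxi(\llkh u\rrkh)\right|\le \gamma^*\,\|\llkh\nabla u\rrkh\|
\end{equation*}
by \Cref{lemma:xi_ineq}, which upon integration and Cauchy--Schwarz on each edge $e$ yields
\begin{equation*}
\left|\int_e \llbracket u\rrbracket\,\llkh\nabla u\rrkh\cdot\bxi(\llkh u\rrkh)\,ds\right|
\le \gamma^*\,\bigl\|\llbracket u\rrbracket\bigr\|_{L^2(e)}\bigl\|\llkh\nabla u\rrkh\bigr\|_{L^2(e)}.
\end{equation*}
Then I would apply \Cref{cor:youngs_ineq} with $p=q=2$ and weight $\epsilon=\frac{\gamma\beta_0}{\gamma^* h_e}$ to obtain
\begin{equation*}
\gamma^*\,\bigl\|\llbracket u\rrbracket\bigr\|_{L^2(e)}\bigl\|\llkh\nabla u\rrkh\bigr\|_{L^2(e)}
\le \frac{\gamma\beta_0}{2h_e}\bigl\|\llbracket u\rrbracket\bigr\|_{L^2(e)}^2
+\frac{(\gamma^*)^2 h_e}{2\gamma\beta_0}\bigl\|\llkh\nabla u\rrkh\bigr\|_{L^2(e)}^2,
\end{equation*}
which has the correct penalty coefficient for the first claim of the lemma; the choice $\epsilon=\frac{\gamma\beta_0}{2\gamma^* h_e}$ gives the second claim in the same way.

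The remaining task is to push the edge quantity $\|\llkh\nabla u\rrkh\|_{L^2(e)}^2$ onto the two adjacent elements. Using the parallelogram bound $\|\tfrac{a+b}{2}\|^2\le \tfrac{1}{2}(\|a\|^2+\|b\|^2)$, I would reduce to $\|\nabla u|_{K^\pm}\|_{L^2(e)}^2$, and then I would invoke \Cref{lemma:bound-on-sobolev-seminorm}, combined with the standard scaling/trace argument, to get a polynomial trace inverse inequality of the form $\|\nabla u|_K\|_{L^2(\partial K)}^2\le C\,k^2 h_K^{-1}\|\nabla u\|_{L^2(K)}^2$ (here $\nabla u$ is a polynomial of degree $\le k-1$ on $K$). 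Summing over the three edges of each triangle and over all $K\in\mathcal T_h$, the $h_e$ on the numerator cancels the $h_K^{-1}$ from the trace estimate, producing $C\,\frac{(\gamma^* k)^2}{4\gamma\beta_0}\sum_K\|\nabla u\|_{L^2(K)}^2$ (respectively $C\,\frac{(\gamma^* k)^2}{2\gamma\beta_0}\sum_K\|\nabla u\|_{L^2(K)}^2$), after absorbing mesh-regularity constants into $C$.

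The main obstacle I expect is the derivation of the polynomial trace inverse inequality with the explicit $k^2$ dependence, since \Cref{lemma:bound-on-sobolev-seminorm} as stated is an \emph{interior} Sobolev-seminorm inverse estimate on a single element $K$. To bridge this to a boundary norm one typically passes to a reference triangle, applies the fact that all norms on a finite-dimensional polynomial space are equivalent, and tracks the $k$-dependence through a Markov-type argument; the $h_K^{-1/2}$ scaling then follows from an affine change of variables. Once that trace inverse inequality is in hand, the rest is bookkeeping: balancing the $\epsilon$ in Young's inequality, telescoping the edge sum into an element sum, and collecting the constants into a single $C>0$.
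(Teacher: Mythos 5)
Your proposal is correct and follows essentially the same route as the paper's proof: discard the direction vector via \Cref{lemma:xi_ineq}, apply the weighted Young's inequality with $\epsilon=\gamma\beta_0/(\gamma^* h_e)$ (resp.\ $\gamma\beta_0/(2\gamma^* h_e)$), convert the one-sided edge gradient norms into element boundary norms, and finish with the $Ck^2h^{-1}$ polynomial trace inequality. The only cosmetic difference is that the paper splits $\llkh\nabla u\rrkh$ into its two one-sided pieces before applying Young's inequality, whereas you apply Cauchy--Schwarz to the average first and split afterwards by convexity; the constants come out identically.
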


\begin{proof}
Note that $\llkh \nabla u \rrkh=\frac{1}{2}(\nabla u)^+ + \frac{1}{2}(\nabla u)^-$. Then, we have
\begin{equation*}
    \begin{aligned}
        \sum_{e\in\mathcal{E}_h}\int_{e} \llbracket u \rrbracket\llkh \nabla u \rrkh&\cdot\bxi(\llkh u \rrkh) ~ds \\ &= \frac{1}{2}\sum_{e\in\mathcal{E}_h}\int_{e} \llbracket u \rrbracket\left((\nabla u)^++(\nabla u)^-\right)\cdot\bxi(\llkh u \rrkh) ~ds \\
        &\geq -\frac{1}{2}\sum_{e\in\mathcal{E}_h}\int_{e} \left|\llbracket u \rrbracket\right| \left(\left|(\nabla u)^+\cdot\bxi(\llkh u \rrkh)\right|+ \left|(\nabla u)^-\cdot\bxi(\llkh u \rrkh)\right|\right) ~ds \\
        &\geq  -\frac{\gamma^*}{2}\sum_{e\in\mathcal{E}_h}\int_{e} \left|\llbracket u \rrbracket\right| \left\|(\nabla u)^+\right\| ~ds -\frac{\gamma^*}{2}\sum_{e\in\mathcal{E}_h}\int_{e} \left|\llbracket u \rrbracket\right| \left\|(\nabla u)^-\right\|~ds ,
    \end{aligned}
\end{equation*}
where we have invoked \Cref{lemma:xi_ineq} in the last step. Moreover, by \Cref{cor:youngs_ineq} with $\epsilon=\frac{\gamma\beta_0}{\gamma^*h}$, we obtain
\begin{equation}\label{eqn:surf_avg_youngs_ineq}
    \gamma^*\int_{e} \left|\llbracket u \rrbracket\right|\, \left\|(\nabla u)^\pm\right\| ~ds \leq \frac{\gamma\beta_0}{2h}\left\| \llbracket u \rrbracket \right\|^2_{L^2(e)} + \frac{(\gamma^*)^2h}{2\gamma\beta_0}\left\| (\nabla u)^\pm \right\|^2_{L^2(e)}.
\end{equation}
So that we have
\begin{equation}\label{eqn:surf_avg_intermediate_ineq}
    \begin{aligned}
        \sum_{e\in\mathcal{E}_h}\int_{e} \llbracket u \rrbracket\llkh \nabla u \rrkh&\cdot\bxi(\llkh u \rrkh) ~ds \\ &\geq -\sum_{e\in\mathcal{E}_h}\frac{\gamma\beta_0}{2h}\left\| \llbracket u \rrbracket \right\|^2_{L^2(e)} \\ &-\sum_{e\in\mathcal{E}_h} \frac{(\gamma^*)^2h}{4\gamma\beta_0}\left\| (\nabla u)^+ \right\|^2_{L^2(e)}-\sum_{e\in\mathcal{E}_h} \frac{(\gamma^*)^2h}{4\gamma\beta_0}\left\| (\nabla u)^- \right\|^2_{L^2(e)}.
    \end{aligned}
\end{equation}
Note that the last two terms above are simply summations over individual edges in the triangulation $\mathcal{T}_h$, and each summation is responsible for accumulating $\left\| (\nabla u)^\pm \right\|^2_{L^2(e)}$ only from one side of the edge. In the global sense, these summations accumulate $\left\| (\nabla u)^{interior} \right\|^2_{L^2(\partial K)}$ for each cell $K$ in the domain. Thus, they can be converted into a single summation over cells. That is, 
\begin{equation}\label{eqn:surf_avg_equivalent_sums}
    \sum_{e\in\mathcal{E}_h} \frac{(\gamma^*)^2h}{4\gamma\beta_0}\left\| (\nabla u)^+ \right\|^2_{L^2(e)}+\sum_{e\in\mathcal{E}_h} \frac{(\gamma^*)^2h}{4\gamma\beta_0}\left\| (\nabla u)^- \right\|^2_{L^2(e)} = \sum_{K\in\mathcal{T}_h} \frac{(\gamma^*)^2h}{4\gamma\beta_0}\left\| \nabla u \right\|^2_{L^2(\partial K)}.
\end{equation}
This expression is useful since we can invoke the trace inequality
\begin{equation} \label{eqn:surf_avg_trace_ineq}
    \sum_{K\in\mathcal{T}_h} \frac{(\gamma^*)^2h}{4\gamma\beta_0}\left\| \nabla u \right\|^2_{L^2(\partial K)} \leq \sum_{K\in\mathcal{T}_h} C\frac{(\gamma^*k)^2}{4\gamma\beta_0}\left\| \nabla u \right\|^2_{L^2(K)},
\end{equation}
for some constant $C>0$. Thus, substituting \Cref{eqn:surf_avg_equivalent_sums,eqn:surf_avg_trace_ineq} into \Cref{eqn:surf_avg_intermediate_ineq} completes the first part of the proof. The second part follows after following the same steps but using $\epsilon=\frac{\gamma\beta_0}{2\gamma^*h}$ in \Cref{eqn:surf_avg_youngs_ineq} instead.
\end{proof}

\begin{lemma}\label{lemma:surf_jump_second_deriv}
Suppose that $A(u)\in\mathbb{R}^{2\times 2}$ is positive definite and there exist $\gamma,\gamma^*\in\mathbb{R}$ such that the eigenvalues $(\gamma_1,\gamma_2)$ of $A(u)$ lie between $[\gamma,\gamma^*]$ for $\forall u\in\VhK$. If $\beta_0 \geq 0$, then there exists a constant $C>0$ such that
\begin{equation*}
    \begin{aligned}
        \sum_{e\in\mathcal{E}_h}\int_{e} \beta_1 h\llbracket u \rrbracket\llbracket\nabla (\nabla u\cdot\mathbf{n}) \rrbracket&\cdot\bxi(\llkh u \rrkh) ~ds \\ &\geq - \sum_{e\in\mathcal{E}_h}\frac{\gamma\beta_0}{2h}\left\| \llbracket u \rrbracket \right\|^2_{L^2(e)} 
        -\sum_{K\in\mathcal{T}_h} C\frac{(\gamma^*\beta_1k)^2}{\gamma\beta_0} \left\|\nabla u \right\|^2_{L^2(K)}. 
    \end{aligned}
\end{equation*}
\end{lemma}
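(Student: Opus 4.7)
The plan is to mimic the structure of the proof of \Cref{lemma:surf_avg}, but with the extra factor $\beta_1 h$ out front and with a second-derivative quantity in place of $\llkh \nabla u \rrkh$. The Young's inequality step will be calibrated so that the $\llbracket u \rrbracket$ contribution produces exactly the coefficient $\gamma\beta_0/(2h)$, and the remaining factor on the second-derivative term must be tamed by a trace inequality and the inverse estimate in \Cref{lemma:bound-on-sobolev-seminorm}.

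First I would use \Cref{lemma:xi_ineq} to bound $\left|\bxi(\llkh u \rrkh)\cdot \llbracket \nabla(\nabla u\cdot \mathbf{n})\rrbracket\right|\leq \gamma^*\left\|\llbracket \nabla(\nabla u\cdot \mathbf{n})\rrbracket\right\|$ on each edge, so that the integrand is controlled by $\gamma^*\beta_1 h\,|\llbracket u \rrbracket|\,\|\llbracket \nabla(\nabla u\cdot\mathbf{n})\rrbracket\|$. Next I would apply \Cref{cor:youngs_ineq} with $p=q=2$ and the weight $\epsilon=\gamma\beta_0/h$, yielding
\begin{equation*}
\gamma^*\beta_1 h\int_e |\llbracket u\rrbracket|\,\|\llbracket \nabla(\nabla u\cdot\mathbf{n})\rrbracket\|\,ds
\;\leq\; \frac{\gamma\beta_0}{2h}\|\llbracket u\rrbracket\|^2_{L^2(e)} + \frac{(\gamma^*)^2\beta_1^2 h^3}{2\gamma\beta_0}\|\llbracket \nabla(\nabla u\cdot\mathbf{n})\rrbracket\|^2_{L^2(e)}.
\end{equation*}
The first term already matches one of the target pieces after summing over $e\in\mathcal{E}_h$ and adding the minus sign.

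Second, I would split the jump as $\llbracket \nabla(\nabla u\cdot\mathbf{n})\rrbracket = (\nabla(\nabla u\cdot\mathbf{n}))^+-(\nabla(\nabla u\cdot\mathbf{n}))^-$, absorb the cross terms into the one-sided norms (at the cost of an absolute constant), and then regroup the edge-wise sums of one-sided norms as a single cell-wise sum $\sum_{K}\|\nabla^2 u\|^2_{L^2(\partial K)}$, exactly as in the step \Cref{eqn:surf_avg_equivalent_sums} of the proof of \Cref{lemma:surf_avg}. The discrete trace inequality for polynomials then gives $\|\nabla^2 u\|^2_{L^2(\partial K)}\leq C k^2 h_K^{-1}\|\nabla^2 u\|^2_{L^2(K)}$, pulling a factor $k^2/h$ out.

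Third, I would invoke \Cref{lemma:bound-on-sobolev-seminorm} with $l=2$, $m=1$, $p=r=2$, $d=2$ on each element $K$ to obtain $\|\nabla^2 u\|_{L^2(K)}\leq C h_K^{-1}\|\nabla u\|_{L^2(K)}$. Combining this with the trace bound and the prefactor $(\gamma^*)^2\beta_1^2 h^3/(2\gamma\beta_0)$ gives, after the two factors of $h^{-1}$ collapse with the $h^3$,
\begin{equation*}
\frac{(\gamma^*)^2\beta_1^2 h^3}{2\gamma\beta_0}\cdot \frac{Ck^2}{h}\cdot\frac{1}{h^2}\|\nabla u\|^2_{L^2(K)} \;=\; C\,\frac{(\gamma^*\beta_1 k)^2}{\gamma\beta_0}\|\nabla u\|^2_{L^2(K)},
\end{equation*}
summed over $K$, which is precisely the second term in the claimed bound. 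Assembling the two pieces, restoring the sign, and using shape regularity to identify $h_e$ and $h_K$ up to an absolute constant (absorbed into $C$) completes the argument.

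The main obstacle is purely bookkeeping: choosing $\epsilon$ so that the jump term has the same coefficient as already produced in \Cref{lemma:surf_avg} (so that the three contributions from $\widehat{\nabla u}$ in the proof of \Cref{thm:stab-sym-ddg} can be cleanly telescoped against $(\gamma\beta_0/h)\|\llbracket u\rrbracket\|^2$), and tracking how the single factor $\beta_1 h$ combines with one trace and one inverse estimate to produce $(\beta_1 k)^2$ rather than $(\beta_1 k)^2/h$ or $(\beta_1 k^2)^2$. No new ideas are needed beyond those already used in the proof of \Cref{lemma:surf_avg}.
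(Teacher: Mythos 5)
Your proposal is correct and follows essentially the same route as the paper's proof: bound the $\bxi$ contraction by $\gamma^*$ times the Euclidean norm via \Cref{lemma:xi_ineq}, apply the weighted Young's inequality of \Cref{cor:youngs_ineq} so the jump term carries coefficient $\gamma\beta_0/(2h)$, convert edge sums of one-sided traces to cell-boundary sums, and then combine one trace inequality with the inverse estimate of \Cref{lemma:bound-on-sobolev-seminorm} to turn $h^3\cdot k^2 h^{-1}\cdot h^{-2}$ into the stated $(\beta_1 k)^2$ factor. The only cosmetic differences are that the paper splits the jump into its two one-sided pieces before applying Young's inequality (with $\epsilon=\gamma\beta_0/(2\gamma^*\beta_1 h^2)$, each piece contributing $\gamma\beta_0/(4h)$) and bounds $\left\|\nabla(\nabla u\cdot\mathbf{n})\right\|^2$ by the full Hessian seminorm explicitly, both of which you handle equivalently up to absolute constants absorbed into $C$.
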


\begin{proof}
By convention, the outward unit normal vector $\mathbf{n}$ is understood as $\mathbf{n}=\mathbf{n}^+$. Also, it can be understood in terms of the inward unit normal vector as $\mathbf{n}=-\mathbf{n}^-$. Therefore, the jump term for the second derivatives can be rewritten as
\begin{equation*}
    \llbracket\nabla (\nabla u\cdot\mathbf{n})\rrbracket= \left(\nabla (\nabla u\cdot\mathbf{n})\right)^+ + \left(\nabla (\nabla u\cdot\mathbf{n})\right)^-.
\end{equation*}
With this understanding, we have that 
\begin{equation*}
    \begin{aligned}
        \int_{e} \beta_1 h\llbracket u \rrbracket\llbracket\nabla (\nabla u\cdot\mathbf{n}) \rrbracket&\cdot\bxi(\llkh u \rrkh) ~ds \\ &= 
        \int_{e} \beta_1 h\llbracket u \rrbracket\left((\nabla (\nabla u\cdot\mathbf{n}) )^+ + (\nabla (\nabla u\cdot\mathbf{n}) )^-\right)\cdot\bxi(\llkh u \rrkh) ~ds
        \\
        &\geq 
        -\int_{e} \beta_1 h\left|\llbracket u \rrbracket\right|\left|\left((\nabla (\nabla u\cdot\mathbf{n}) )^+ + (\nabla (\nabla u\cdot\mathbf{n}) )^-\right)\cdot\bxi(\llkh u \rrkh)\right| ~ds
        \\
        &\geq 
        -\gamma^*\int_{e} \beta_1 h\left|\llbracket u \rrbracket\right|\left(\left\|(\nabla (\nabla u\cdot\mathbf{n}) )^+\right\| + \left\|(\nabla (\nabla u\cdot\mathbf{n}) )^-\right\|\right) ~ds.
    \end{aligned}
\end{equation*}
Note that we have invoked \Cref{lemma:xi_ineq} and triangle inequality in the last step. Furthermore, by \Cref{cor:youngs_ineq} with $\epsilon=\frac{\gamma\beta_0}{2\gamma^*\beta_1h^2}$, we obtain
\begin{equation*}
    \gamma^*\int_{e} \beta_1 h \left|\llbracket u \rrbracket\right| \left\|(\nabla (\nabla u\cdot\mathbf{n}) )^\pm \right\| ds \leq \frac{\gamma\beta_0}{4h}\left\| \llbracket u \rrbracket \right\|^2_{L^2(e)} + \frac{(\gamma^*\beta_1)^2h^3}{\gamma\beta_0} \left\|(\nabla (\nabla u\cdot\mathbf{n}) )^\pm \right\|^2_{L^2(e)}.
\end{equation*}
Thus, 
\begin{equation}\label{eqn:surf_jump_second_deriv_intermediate_ineq}
    \begin{aligned}
        \sum_{e\in\mathcal{E}_h}\int_{e} \beta_1 h\llbracket u \rrbracket\llbracket\nabla (\nabla u\cdot\mathbf{n}) \rrbracket\cdot\bxi(\llkh u \rrkh) ~ds &\geq - \sum_{e\in\mathcal{E}_h}\frac{\gamma\beta_0}{2h}\left\| \llbracket u \rrbracket \right\|^2_{L^2(e)} \\ 
        &-\sum_{e\in\mathcal{E}_h}\frac{(\gamma^*\beta_1)^2h^3}{\gamma\beta_0} \left\|(\nabla (\nabla u\cdot\mathbf{n}) )^+ \right\|^2_{L^2(e)}  \\
        &-\sum_{e\in\mathcal{E}_h}\frac{(\gamma^*\beta_1)^2h^3}{\gamma\beta_0} \left\|(\nabla (\nabla u\cdot\mathbf{n}) )^- \right\|^2_{L^2(e)}.
    \end{aligned}
\end{equation}
As in the proof of \Cref{lemma:surf_avg}, we convert the summations over edges to a summation over cells:
\begin{equation}\label{eqn:surf_jump_second_deriv_equiv}
    \begin{aligned}
        \sum_{e\in\mathcal{E}_h}\frac{(\gamma^*\beta_1)^2h^3}{\gamma\beta_0} \left\|(\nabla (\nabla u\cdot\mathbf{n}) )^+ \right\|^2_{L^2(e)} &+ \sum_{e\in\mathcal{E}_h}\frac{(\gamma^*\beta_1)^2h^3}{\gamma\beta_0} \left\|(\nabla (\nabla u\cdot\mathbf{n}) )^- \right\|^2_{L^2(e)} \\
        &= \sum_{K\in\mathcal{T}_h} \frac{(\gamma^*\beta_1)^2h^3}{\gamma\beta_0} \left\|\nabla (\nabla u\cdot\mathbf{n})\right\|^2_{L^2(\partial K)}.
    \end{aligned}
\end{equation}
At this point, it might be tempting to invoke the trace theorem for the norm on the right-hand side of the above equation. However, a more useful inequality can be obtained by considering the Euclidean norm $\left\|\nabla (\nabla u\cdot\mathbf{n}) \right\|^2$. We first note that
\begin{equation*}
    \begin{aligned}
        \left\|\nabla (\nabla u\cdot\mathbf{n}) \right\|^2 &
        =(u_{xx}n_1 + u_{yx}n_2)^2 + (u_{xy}n_1 + u_{yy}n_2)^2 \\ 
        &= (u_{xx}^2+u_{xy}^2)n_1^2 + (u_{yx}^2+u_{yy}^2)n_2^2 + 2n_1n_2(u_{xx}u_{yx} + u_{xy}u_{yy}).
    \end{aligned}
\end{equation*}
For the cross-product term above, we invoke \Cref{lemma:youngs_ineq} with $p=q=2$  
\begin{equation*}
    \begin{aligned}
        2n_1n_2(u_{xx}u_{yx} + u_{xy}u_{yy}) &= 2\left( (n_2u_{xx})(n_1u_{yx})+(n_2u_{xy})(n_1u_{yy})\right)\\
        &\leq (u_{xx}^2+u_{xy}^2)n_2^2 + (u_{yx}^2+u_{yy}^2)n_1^2.
    \end{aligned}
\end{equation*}
Since $n_1^2+n_2^2=1$, we obtain
\begin{equation*}
    \left\|\nabla (\nabla u\cdot\mathbf{n}) \right\|^2 
    \leq u_{xx}^2 + u_{xy}^2 + u_{yx}^2 + u_{yy}^2.
\end{equation*}
Using this and the trace inequality gives
\begin{equation*}
    \begin{aligned}
        \left\|\nabla (\nabla u\cdot\mathbf{n})\right\|^2_{L^2(\partial K)} &= \int_{\partial K} \left\|\nabla (\nabla u\cdot\mathbf{n}) \right\|^2 ~ds  \\
        &\leq \int_{\partial K} \left(u_{xx}^2 + u_{xy}^2 + u_{yx}^2 + u_{yy}^2\right) ~ds \\
        &\leq C \frac{k^2}{h} \int_{K} \left(u_{xx}^2 + u_{xy}^2 + u_{yx}^2 + u_{yy}^2\right) ~dxdy 
        = C \frac{k^2}{h} \left| u \right|^2_{H^2(K)}.
    \end{aligned}
\end{equation*}
By \Cref{lemma:bound-on-sobolev-seminorm} with $d=l=p=r=2$ and $m=1$, we have
\begin{equation*}
    \left| u \right|_{H^2(K)} \leq \frac{C}{h} \left| u \right|_{H^1(K)}=\frac{C}{h} \left\| \nabla u \right\|_{L^2(K)},
\end{equation*}
which leads to
\begin{equation}\label{eqn:surf_jump_second_deriv_final}
    \left\|\nabla (\nabla u\cdot\mathbf{n})\right\|^2_{L^2(\partial K)} 
    \leq C \frac{k^2}{h^3} \left\| \nabla u \right\|^2_{L^2(K)}.
\end{equation}
Finally, substituting \Cref{eqn:surf_jump_second_deriv_equiv,eqn:surf_jump_second_deriv_final} in \Cref{eqn:surf_jump_second_deriv_intermediate_ineq} leads to the desired result.
\end{proof}
\bibliographystyle{siamplain}
\bibliography{references}
\end{document}